  \theoremstyle{plain}
  \newtheorem{theorem}{Theorem}[section]
  \newtheorem{prop}[theorem]{Proposition}
  \newtheorem{corollary}[theorem]{Corollary}
  \theoremstyle{definition}
  \newtheorem{defn}[theorem]{Definition}
  \theoremstyle{remark}
  \newtheorem{remark}[theorem]{Remark}
  \DeclarePairedDelimiter{\abs}{\lvert}{\rvert}
  \DeclarePairedDelimiter{\fp}{\{}{\,|\kern-0.2em \}}
  \DeclarePairedDelimiter{\FK}{[}{\,|\kern-0.2em ]}
  \DeclarePairedDelimiter{\bb}{\{}{\}_{\mathrm{GC}}}
  \newcommand{\K}{\mathbb{K}}
  \newcommand{\Z}{\mathbb{Z}}
  \renewcommand{\d}{\mathrm{d}}
  \renewcommand{\phi}{\varphi}
  \newcommand{\sign}{\operatorname{sign}}
  \newcommand{\pr}{\operatorname{pr}}
  \newcommand{\img}{\operatorname{Im}}
  \newcommand{\id}{\operatorname{id}}
  \newcommand{\del}{\partial}
  \renewcommand{\O}{\mathcal{O}}
  \newcommand{\factor}[2]{\left.\raisebox{.2em}{$#1$}\middle/\raisebox{-.2em}{$#2$}\right.}
  \newcommand{\ph}{[[t]]}
  \newcommand{\g}{\mathfrak{g}}
  \newcommand{\ad}{\operatorname{ad}}
  \newcommand{\Hom}{\operatorname{Hom}}
  \newcommand{\Der}{\operatorname{Der}}
	\newcommand{\abLC}{\mathfrak{\widetilde C}}
	\newcommand{\bLC}{\mathfrak{C}}
	\newcommand{\LC}{\operatorname{C}_{\mathrm{CE}}}
  \newcommand{\LH}{\operatorname{H}_{\mathrm{CE}}}
  \newcommand{\aLC}{\operatorname{\widetilde{C}}_{\mathrm{CE}}}
	\newcommand{\aLH}{\operatorname{\widetilde{H}}_{\mathrm{CE}}}
	\newcommand{\aC}{\operatorname{C}}
	\newcommand{\eC}{\mathcal{C}}
  \newcommand{\delaa}{\del_{\alpha\alpha}}
  \newcommand{\delma}{\del_{\nu\alpha}}
  \newcommand{\delam}{\del_{\alpha\nu}}
  \newcommand{\delmm}{\del_{\nu\nu}}
  \newcommand{\aHC}{\operatorname{\tilde{H}\!C}}
	\DeclareMathOperator*{\cycl}{\scalebox{1.9}{\raisebox{-0.4ex}{$\circlearrowleft$}}}
  \newcommand{\vera}[1]{%
  \begin{tikzpicture}[scale=0.3,point/.style={draw,shape=circle,fill=blue,minimum size=2,inner sep=0}, circ/.style={draw,shape=circle,minimum size=5,inner sep=0}]
   \node [circ] (a0) at ( #1/2 + 0.5,1) {};
   \draw (a0) -- +(0,0.7) ;
   \foreach  \i in{1,...,#1}
   {
  \draw (\i,0) -- (a0);
  }
  \end{tikzpicture} 
  }
  \newcommand{\verm}[1]{%
  \begin{tikzpicture}[scale=0.3,point/.style={draw,shape=circle,fill=blue,minimum size=2,inner sep=0},  circ/.style={draw,shape=circle,minimum size=5,inner sep=0}]
   \node [point] (a0) at ( #1/2 + 0.5,1) {};
   \draw (a0) -- +(0,0.7) ;
   \foreach  \i in{1,...,#1}
   {
  \draw (\i,0) -- (a0);
  }
  \end{tikzpicture} 
  }
  \newcommand{\vermk}{%
  \begin{tikzpicture}[scale=0.3,point/.style={draw,shape=circle,fill=blue,minimum size=2,inner sep=0},  circ/.style={draw,shape=circle,minimum size=5,inner sep=0}]
   \node [point] (a0) at ( 2,1.3) {};
   \draw (a0) -- +(0,0.7) ;
   \draw (1,0) -- (a0);
   \draw (3,0) -- (a0);
   \draw[dotted] (1,0) -- node[above,scale=.7] {$k$} (3,0);
  \end{tikzpicture} 
  }
  \newcommand{\verak}{%
  \begin{tikzpicture}[scale=0.3,point/.style={draw,shape=circle,fill=blue,minimum size=2,inner sep=0},  circ/.style={draw,shape=circle,minimum size=5,inner sep=0}]
   \node [circ] (a0) at ( 2,1.3) {};
   \draw (a0) -- +(0,0.7) ;
   \draw (1,0) -- (a0);
   \draw (3,0) -- (a0);
   \draw[dotted] (1,0) -- node[above,scale=.7] {$k$} (3,0);
  \end{tikzpicture} 
  }
  \title{$\alpha$-type Chevalley-Eilenberg cohomology of Hom-Lie algebras and bialgebras}
	 \author[1]{Benedikt Hurle}
\author[1]{Abdenacer Makhlouf}
\affil[1]{IRIMAS, Département de Mathématiques, Universit\'e de Haute Alsace, Mulhouse (France)}
\begin{document}
  
\maketitle

\begin{abstract}
	The purpose of this paper is to define an $\alpha$-type  cohomology, which we call $\alpha$-type Chevalley-Eilenberg cohomology, for Hom-Lie algebras. We relate it to the known Chevalley-Eilenberg cohomology and provide explicit computations for some examples. Moreover, using this cohomology we study formal deformations of Hom-Lie algebras, where the bracket as well as the structure map $\alpha$ are deformed. Furthermore, we provide a generalization of the Grand Crochet and study, in a particular case, the $\alpha$-type  cohomology for Hom-Lie bialgebras.
\end{abstract}


\section*{Introduction}

Hom-Lie algebras were introduced in \citep{silvestrov06} by considering deformations of Lie algebras of vector fields by $\sigma$-derivations. They were further studied in \citep{homalg}, where Hom-associative algebras were defined and the definition modified slightly. The main feature is that the identities are modified using a homomorphism denoted usually by  $\alpha$.   

In this paper we define an $\alpha$-type Chevalley-Eilenberg cohomology for Hom-Lie algebras. It is an extension of the cohomology for Hom-Lie algebras considered in \citep{homcoho}. It is build similar to the $\alpha$-type Hochschild cohomology for Hom-associative algebras  defined in \citep{homhcoho}. It allows us to study deformations of Hom-Lie algebras, where the bracket and the structure map $\alpha$ are deformed. We obtain the expected  results, that is the first order term of a deformation is a 2-cocycle and more generally the order by order construction of deformations is equivalent to solving  equations in the third cohomology space. To prove this we define an $L_\infty$ algebra, with Hom-Lie algebras as Maurer-Cartan elements and related to the $\alpha$-type cohomology, using a graph complex corresponding to a free operad. Notice that deformations of Hom-Lie algebras have also been considered in \citep{makhlouf_ParamFormDefHomAssLie} and \citep{homcoho}, but where only the bracket is deformed. 
In the case the Hom-Lie algebra is just an ordinary Lie algebra, i.e.\ the structure map is the identity, we compute the $\alpha$-type cohomology in terms of the ordinary one. More generally  for Hom-Lie algebras of Lie type, which includes the ones with invertible structure map, we compute the cohomology and relate it to the cohomology of a Lie algebra with an endomorphism.   Moreover, concrete examples are provided, we compute the $\alpha$-type cohomology explicitly for some low-dimensional Hom-Lie algebras with non-invertible structure map. 
We also prove a generalization of the well-known Whitehead theorem for simple Lie algebras to the Hom-case.  

Furthermore, we  give a generalization to the Hom-case of the grand crochet, which can be used to study Lie bialgebras. Hom-Lie bialgebras have been studied in different approaches \citep{MR3310684,sheng1}. The super case has been discussed in \citep{fadous}.  In the associative case a cohomology for Hom-bialgebras has been defined  in \citep{gshom}. This allows us to write down in an easy way the cohomology for Hom-Lie bialgebras with fixed structure map for an example studied in a less general case in \citep{shengbb}.

This paper is structured as follows: in the first section we recall the basic definitions of Hom-Lie and associative algebras and their modules. In  \cref{sc:coho} we define the $\alpha$-type Chevalley-Eilenberg cohomology and gives some of its properties. In \cref{sc:linfty} we give an $L_\infty$ structure such that the Maurer-Cartan elements are Hom-Lie algebras and which can be used to study  deformations of Hom-Lie algebras using the $\alpha$-type cohomology.
In the next section we compute the $\alpha$-type cohomology, or at least its dimensions, for some concrete examples. 
In \cref{sc:grandcrochet} we give a generalization of the grand crochet or big bracket constructed in \citep{lecomte,kosmann} and for the Hom-case in \citep{shengbb}. Using this we define in \cref{sc:bialgcoho} a cohomology for Hom-Lie bialgebras with fixed structure map. Finally we give in the last section some remarks on how this can be generalized to an $\alpha$-type cohomology for Hom-Lie bialgebras. 

\section{Basics}
Let $\K$ be a field of characteristic zero, but note that most constructions should also work in other characteristics (not 2) or  if $\K$ is a ring  containing the rational numbers. 
  
We recall  basic definitions and constructions of Hom-Lie algebras and Hom-associative algebras.

	A Hom-module $(V,\alpha)$ is a vector space $V$ together with a linear map $\alpha$, called the structure map. 
	A morphism between Hom-modules $(V,\alpha)$ and $(W, \beta)$ is a linear map $\phi: V \to W$ such that $\phi \circ \alpha = \beta \circ \phi$.

\begin{defn}[Hom-Lie algebra]
	A Hom-Lie algebra $(\g,[\cdot,\cdot],\alpha)$ is a Hom-module $(\g,\alpha)$ with a skew-symmetric linear map $[\cdot,\cdot]:  \g \otimes \g \to \g$, called Hom-Lie bracket, such that the Hom-Jacobi identity is satisfied and the bracket is compatible with $\alpha$, i.e.\ for $x,y,z \in \g$
	\begin{gather}
		[[x,y],\alpha(z)] +  [[y,z],\alpha(x)] +[[z,x],\alpha(y)]  =0 ,              \\
		[\alpha(x),\alpha(y)]    = \alpha([x,y]). 
	\end{gather}
	The second equation is called multiplicativity and is not required in some papers.
	We will often write $(\g,\nu,\alpha)$ instead of $(\g,[\cdot,\cdot],\alpha)$, where  $\nu: \g \otimes \g \to \g$ is given by $\nu(a \otimes b) =[a,b]$.
\end{defn}

A morphism of Hom-Lie algebras is a morphism of the underlying Hom-modules, which also preserves the product.  Morphisms for  other types of  Hom-algebras and Hom-coalgebras are defined similarly.

A well known method introduced by D. Yau for constructing Hom-Lie algebras is given as follows:

\begin{prop}
	Let $(\g,\nu,\alpha)$ be a Hom-Lie algebra and $\gamma:\g \to \g$ be a Hom-Lie algebra morphism, then $(\g, \gamma \circ \nu , \gamma \circ \alpha)$ is  again a Hom-Lie algebra, which we denote by $\g_\gamma$ and call it   the Yau twist of $\g$ by $\gamma$.
\end{prop}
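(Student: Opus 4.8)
The plan is to check directly that $(\g,\gamma\circ\nu,\gamma\circ\alpha)$ satisfies the three conditions in the definition of a Hom-Lie algebra, using the two properties that characterise $\gamma$ as a Hom-Lie algebra endomorphism of $\g$. As a morphism of the underlying Hom-modules, $\gamma$ commutes with the structure map, $\gamma\circ\alpha=\alpha\circ\gamma$; as a morphism of brackets it satisfies $\gamma\circ\nu=\nu\circ(\gamma\otimes\gamma)$, that is $\gamma([x,y])=[\gamma(x),\gamma(y)]$. To keep the bookkeeping transparent I abbreviate the twisted bracket and structure map by $[x,y]_\gamma:=\gamma([x,y])$ and $\alpha_\gamma:=\gamma\circ\alpha$.

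Skew-symmetry is immediate from the linearity of $\gamma$ and the skew-symmetry of $\nu$, since $[x,y]_\gamma=\gamma([x,y])=-\gamma([y,x])=-[y,x]_\gamma$. For multiplicativity I would compute both sides of $\alpha_\gamma([x,y]_\gamma)=[\alpha_\gamma(x),\alpha_\gamma(y)]_\gamma$ and show they agree. The left-hand side is $\gamma\alpha\gamma([x,y])$, which by $\alpha\gamma=\gamma\alpha$ and the original multiplicativity equals $\gamma^2\alpha([x,y])$. For the right-hand side one uses the bracket-preservation property $\gamma([a,b])=[\gamma(a),\gamma(b)]$ to extract the inner factor of $\gamma$, obtaining $\gamma^2([\alpha(x),\alpha(y)])$, and then applies the original multiplicativity; this again yields $\gamma^2\alpha([x,y])$, so the two sides coincide.

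The Hom-Jacobi identity is the only genuinely multi-term computation, though it too reduces cleanly. The guiding observation is that each twisted bracket introduces one extra factor of $\gamma$, and the morphism property lets one migrate every $\gamma$ to the outside. Concretely, a single cyclic term expands as $[[x,y]_\gamma,\alpha_\gamma(z)]_\gamma=\gamma\big([\gamma([x,y]),\gamma\alpha(z)]\big)=\gamma^2\big([[x,y],\alpha(z)]\big)$, where the last step uses $\gamma([a,b])=[\gamma(a),\gamma(b)]$ once more. Summing over the three cyclic permutations and pulling out the common operator $\gamma^2$ leaves exactly $\gamma^2$ applied to the left-hand side of the original Hom-Jacobi identity, which vanishes; since $\gamma^2$ is linear, the twisted identity holds.

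I do not expect any real obstacle here: the entire proof is a matter of counting the occurrences of $\gamma$ and invoking its two defining properties at the right moments, after which each axiom collapses to $\gamma^{2}$ (or, for multiplicativity, $\gamma^{2}\alpha$) applied to the corresponding original axiom. The only point deserving care is to use the Hom-module property $\alpha\gamma=\gamma\alpha$ where the structure map appears, as opposed to the bracket-preservation property $\gamma([a,b])=[\gamma(a),\gamma(b)]$ needed to extract the extra copies of $\gamma$ from the twisted brackets.
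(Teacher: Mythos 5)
Your verification is correct: each axiom for $(\g,\gamma\circ\nu,\gamma\circ\alpha)$ does collapse, via $\gamma([a,b])=[\gamma(a),\gamma(b)]$ and $\gamma\circ\alpha=\alpha\circ\gamma$, to $\gamma^2$ (respectively $\gamma^2\circ\alpha$) applied to the corresponding axiom of $(\g,\nu,\alpha)$, and your multiplicativity check correctly uses the Hom-module property to commute $\alpha$ past $\gamma$. The paper states this proposition without proof, as Yau's well-known construction, and your direct computation is precisely the standard argument one would supply in its place.
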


If a Hom-Lie algebra is the Yau twist of an ordinary Lie algebra, we say  it is of Lie-type.

\begin{defn}[Hom-associative algebra]
	A Hom-associative algebra $(A,\mu,\alpha)$  consists of   a Hom-module $(A,\alpha)$ and a linear map $\mu:A \otimes A \to A$, such that 
	\begin{align}
		\mu \circ  ( \id \otimes \alpha) =  \mu  \circ  ( \alpha \otimes \id) \\	
		\mu \circ  (\alpha \otimes \alpha) = \alpha \circ \mu.
	\end{align}
	\end{defn}
  
\begin{prop}\citep{homalg}
	Let $(A,\mu,\alpha)$ be a Hom-associative algebra, then the commutator $[x,y] = xy -yx$ defines a Hom-Lie algebra structure on $A$, which we denote by $A_L$.
\end{prop}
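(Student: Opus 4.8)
The plan is to verify the three defining conditions of a Hom-Lie algebra for the triple $(A,[\cdot,\cdot],\alpha)$ with $[x,y] = \mu(x,y) - \mu(y,x)$: skew-symmetry of the bracket, multiplicativity (compatibility with $\alpha$), and the Hom-Jacobi identity. Skew-symmetry is immediate and requires no Hom-associativity, since $[x,y] = xy - yx = -(yx - xy) = -[y,x]$.

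For multiplicativity I would use only the second Hom-associative axiom $\mu \circ (\alpha \otimes \alpha) = \alpha \circ \mu$. Applying it to each summand gives $[\alpha(x),\alpha(y)] = \mu(\alpha(x),\alpha(y)) - \mu(\alpha(y),\alpha(x)) = \alpha(\mu(x,y)) - \alpha(\mu(y,x))$, and linearity of $\alpha$ turns the right-hand side into $\alpha([x,y])$, as required.

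The substantive step is the Hom-Jacobi identity. First I would expand each of the three cyclic terms $[[x,y],\alpha(z)]$, $[[y,z],\alpha(x)]$, $[[z,x],\alpha(y)]$ into four monomials, producing twelve terms in total, each of the shape $(ab)\alpha(c)$ or $\alpha(c)(ab)$. Then I would normalize every monomial using the Hom-associativity axiom in the form $\mu(\mu(a,b),\alpha(c)) = \mu(\alpha(a),\mu(b,c))$, that is $(ab)\alpha(c) = \alpha(a)(bc)$, so that all twelve terms are written with $\alpha$ on the left factor. After this rewriting the six distinct expressions $\alpha(\cdot)(\cdot\,\cdot)$ each occur exactly twice with opposite signs, and the whole sum collapses to zero.

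The only real difficulty here is bookkeeping rather than anything conceptual: one must bring all twelve monomials to the same associativity pattern and track signs so that the pairwise cancellation is transparent. The statement is the Hom-analogue of the classical fact that the commutator of an associative product is a Lie bracket, the single new ingredient being that $\alpha$ sits in exactly the slots dictated by the Hom-Jacobi and multiplicativity conditions, which is precisely why the two Hom-associative axioms are what make the two nontrivial identities go through.
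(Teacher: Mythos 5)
Your proof is correct and is essentially the standard argument: the paper itself gives no proof, deferring to the cited reference \citep{homalg}, where the verification is exactly this direct computation --- skew-symmetry is formal, multiplicativity follows from $\mu\circ(\alpha\otimes\alpha)=\alpha\circ\mu$, and the Hom-Jacobi identity follows by rewriting all twelve monomials via $(ab)\alpha(c)=\alpha(a)(bc)$ so they cancel in pairs. Note only that you correctly used the intended Hom-associativity axiom $\mu\circ(\mu\otimes\alpha)=\mu\circ(\alpha\otimes\mu)$, whereas the paper's displayed axiom $\mu\circ(\id\otimes\alpha)=\mu\circ(\alpha\otimes\id)$ contains an evident typo.
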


Next, we give the definition of modules for Hom-Lie and Hom-associative algebras.

\begin{defn}[Representation of a Hom-Lie algebra]
	Let $(\g,[\cdot,\cdot],\alpha)$ be a Hom-Lie algebra.
	A  representation of $\g$ is a Hom-module $(V,\beta)$ with an action $\rho:\g \otimes V \to V, g \otimes v \mapsto g
	 \cdot v$, such that for all  $x,y \in g$ and $v \in V$
	\begin{equation}
		[x,y] \cdot \beta(v) = \alpha(x) \cdot ( y \cdot v) -  \alpha(y) \cdot ( x \cdot v).
	\end{equation}
	We require that it is multiplicative, i.e. 
	\begin{equation}
		\beta(x \cdot  v) = \alpha(x) \cdot \beta(v).
	\end{equation}
	We call $(M,\rho,\beta)$ also a Hom-Lie module over $\g$ or simply a $\g$-module.
\end{defn}
    
It is clear that a Hom-Lie algebra $\g$ is a $\g$-module  by the adjoint action, given by $\ad_x y := x \cdot y = [x,y]$.

\begin{defn}
	Let $(A,\mu,\alpha)$ be a Hom-associative algebra and $(M,\beta)$ be a Hom-module. Further let $\rho:A \otimes M \to M, (a \otimes m)\mapsto a \cdot m$ be a linear map, then $(M,\rho)$ is called an (left) $A$-module  if 
	\begin{gather}
		(a b) \cdot \beta(m)  = \alpha( a ) ( b \cdot m), \\	
		\beta(a \cdot m) = \alpha(a) \cdot \beta(m).
	\end{gather}
Similarly one can define  right $A$-modules.

	An $A$-bimodule is a Hom-module $(M,\beta)$, with two maps $\rho: A \otimes M \to M, a \otimes m \mapsto a \cdot m$ and $\lambda: M \otimes A \to M, a \otimes m \mapsto m \cdot a$, such that $\rho$ is a left and $\lambda$ a right module structure and 
	\begin{equation}
		\alpha(a) \cdot( m \cdot b) = ( a \cdot m) \cdot \alpha(b).
	\end{equation}
\end{defn}

Note that a left $A$-module can be considered as an $A$-bimodule, where the right action is trivial, i.e.\ identically zero.

Obviously $A$ is an $A$-(bi)module, where the action is giving by the multiplication in $A$.

\begin{prop}
	Let $(A,\mu,\alpha)$ be a Hom-associative algebra and $(M,\beta)$ an $A$-bimodule. Then $M$ is a representation of the Hom-Lie algebra $A_L$, with action $a \cdot_L m =  a \cdot m - m \cdot a$, which we denote by $M_L$.
	We will simply write $a \cdot m$ for $a \cdot_L m$  if it is clear that we consider the Hom-Lie action.   
\end{prop}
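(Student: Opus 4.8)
The plan is to verify directly the two defining axioms of a Hom-Lie representation for the proposed action $a \cdot_L m = a \cdot m - m \cdot a$ on the Hom-module $(M,\beta)$ over the commutator Hom-Lie algebra $A_L = (A, [\cdot,\cdot], \alpha)$, where $[a,b] = ab - ba$. The whole argument is the module-level analogue of the passage from a Hom-associative algebra to its commutator Hom-Lie algebra, so I expect no new ideas beyond carefully unfolding the commutators and invoking the bimodule axioms.

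First I would dispose of multiplicativity, $\beta(a \cdot_L m) = \alpha(a) \cdot_L \beta(m)$, which is immediate: expanding the left-hand side as $\beta(a \cdot m) - \beta(m \cdot a)$ and applying the left- and right-module multiplicativity relations $\beta(a \cdot m) = \alpha(a) \cdot \beta(m)$ and $\beta(m \cdot a) = \beta(m) \cdot \alpha(a)$ yields $\alpha(a)\cdot\beta(m) - \beta(m)\cdot\alpha(a) = \alpha(a)\cdot_L\beta(m)$.

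The substantive step is the representation identity $[a,b] \cdot_L \beta(m) = \alpha(a)\cdot_L(b\cdot_L m) - \alpha(b)\cdot_L(a\cdot_L m)$. I would expand the left-hand side using $[a,b] = ab - ba$ and the definition of $\cdot_L$ into the four terms $(ab)\cdot\beta(m) - (ba)\cdot\beta(m) - \beta(m)\cdot(ab) + \beta(m)\cdot(ba)$, then rewrite each via the left-module axiom $(xy)\cdot\beta(m) = \alpha(x)\cdot(y\cdot m)$ and the right-module axiom $\beta(m)\cdot(xy) = (m\cdot x)\cdot\alpha(y)$. In parallel I would expand the right-hand side by unfolding every occurrence of $\cdot_L$, producing eight terms, of the four shapes $\alpha(x)\cdot(y\cdot m)$, $\alpha(x)\cdot(m\cdot y)$, $(y\cdot m)\cdot\alpha(x)$ and $(m\cdot y)\cdot\alpha(x)$.

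Comparing the two expansions, the four ``left-then-left'' and ``right-then-right'' terms match on the nose; what survives on the right-hand side is a collection of four mixed (left–right) terms. The crux of the argument — the only place where anything beyond bookkeeping occurs — is that these mixed terms cancel exactly by the bimodule compatibility $\alpha(a)\cdot(m\cdot b) = (a\cdot m)\cdot\alpha(b)$, applied with both orderings of $a$ and $b$. Thus I expect the computation to be entirely mechanical once the terms are organized, the sole conceptual input being that the bimodule axiom is precisely what is needed to kill the cross terms; notably, the Hom-associativity of $\mu$ is never used directly, only the module axioms it governs.
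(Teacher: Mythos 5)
Your proposal is correct and follows exactly the route the paper intends: the paper's proof consists solely of the remark that one has to verify $[a,b]\cdot_L \beta(m) = \alpha(a)\cdot_L(b\cdot_L m) - \alpha(b)\cdot_L(a\cdot_L m)$, and your expansion into eight terms with the cross terms cancelling via the bimodule compatibility $\alpha(a)\cdot(m\cdot b) = (a\cdot m)\cdot\alpha(b)$ is precisely that verification carried out in full (you also check multiplicativity, which the paper leaves tacit). Note in passing that the paper's displayed identity has a typo, writing $\alpha(m)$ where $\beta(m)$ is meant, which your version correctly repairs.
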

\begin{proof}
	 One has to verify 
	\begin{equation}
		[a ,b ] \cdot_L \alpha(m) = \alpha(a) \cdot_L(b \cdot_L m) - \alpha(b) \cdot_L(a \cdot_L m).
	\end{equation}
\end{proof}

Next we define the dual notion to Hom-Lie algebra.

For a vector space $\g$ and $x,y,z \in \g$, we denote by $\sigma$ the cyclic permutation map $ x \otimes y \otimes z \mapsto z \otimes x \otimes y$ and by $\tau: \g \otimes \g \to \g \otimes \g$ the flip, i.e. $\tau(x \otimes y) = y \otimes x$. We call a linear map $\delta :\g \to \g \otimes \g$ skewsymmetric if $\delta = - \tau\circ \delta$.

\begin{defn}
	A Hom-Lie coalgebra $(\g,\delta,\alpha)$ is a Hom-module $(\g,\alpha)$ with a skew-symmetric cobracket $\delta:\g \to \g  \otimes \g$, satisfying.
	\begin{equation}
		\delta \circ  ( \delta \otimes \alpha)\circ (\id + \sigma + \sigma^2)  =0.
	\end{equation}
	We also require it to be multiplicative, i.e. $\delta \circ  \alpha = (\alpha \otimes \alpha) \circ \delta$.
\end{defn}

Let $(\g,\nu,\alpha)$ be a Hom-Lie algebra, then a derivation is a linear map $\phi : \g \to \g$, such that $\phi([x,y])= [\phi(x),y] + [x,\phi(y)]$ and an $\alpha$-derivation is a linear map $\psi:\g \to \g$, such that $\psi([x,y]) =[\psi(x),\alpha(y)]+ [\alpha(x),\psi(y)]$. We denote the set of derivations of $\g$ by $\Der(\g)$ and the set of $\alpha$-derivations by $\operatorname{\alpha-Der}(\g)$.

We  recall the definition of a Hom-Lie bialgebra.

\begin{defn}[Hom-Lie bialgebra]
	A Hom-Lie bialgebra is a tuple $(\g,\nu,\delta,\alpha,\beta)$, such that 
	$(\g,\nu,\alpha)$ is a Hom-Lie algebra, 
	$(\g,\delta,\beta)$ is a Hom-Lie coalgebra and they are compatible in the sense that
	\begin{equation}\label{eq:liebi}
		\begin{split}
			\delta([x,y]) = \alpha(x^{(1)}) \otimes [x^{(2)} , \beta(y)] + [x^{(1)}, \beta(y)] \otimes \alpha(x^{(1)}) \\
			+  [ \beta(x),y^{(1)}] \otimes \alpha(y^{(1)}) + \alpha(y^{(1)}) \otimes [\beta(x), y^{(2)}].
		\end{split}
	\end{equation}
	Here we use Sweedler's  notation $\delta(x) = x^{(1)} \otimes x^{(2)}$ for the cobracket. Note that on the right hand side there is an implicit sum.
	We also require that $\beta\circ  \nu = \nu\circ  (\beta \otimes \beta)$, $\delta\circ  \alpha = (\alpha \otimes \alpha)\circ \delta$ and $\beta\circ \alpha = \alpha\circ \beta$.
\end{defn}

The condition~\eqref{eq:liebi} is often written as
\begin{equation}
	\delta([x,y]) = \ad_{\beta(x)} \delta(y) - \ad_{\beta(y)} \delta(x), 
\end{equation}
where the adjoint representation of $\g$ on $\g^{\otimes k}$ is defined by
\begin{equation}
	\ad_x (y_1 \otimes \dots \otimes y_k) = \sum_{i=1}^k \alpha(y_1) \otimes \dots \otimes [x,y_i] \otimes  \dots \alpha(y_k).
\end{equation}
Often only the cases $\alpha = \beta$ or $\alpha = \beta^{-1}$ are considered.

A Hom-Lie bialgebra morphism is a  morphism of the underlying Hom-Lie  algebra and Hom-Lie coalgebra.

\begin{prop}
	Let $(\g,\nu,\delta,\alpha,\beta)$ be a Hom-Lie bialgebra and $\gamma:\g \to \g$ be a bialgebra morphism then $(\g,\gamma \circ \nu,\delta,\gamma \circ\alpha,\beta)$,  $(\g,\nu,\delta \circ\gamma,\alpha,\gamma \circ\beta)$  and  $(\g,\gamma \nu,\delta\circ \gamma,\gamma\circ \alpha,\gamma \circ\beta)$ are again  Hom-Lie bialgebras. 
\end{prop}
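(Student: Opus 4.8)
The plan is to exploit that the only properties of $\gamma$ we shall ever use are those encoded in its being a bialgebra morphism. Since $\gamma$ is simultaneously a morphism of the underlying Hom-Lie algebra and of the Hom-Lie coalgebra, it commutes with both structure maps and intertwines the bracket and the cobracket:
\begin{gather*}
\gamma\circ\alpha = \alpha\circ\gamma, \qquad \gamma\circ\beta = \beta\circ\gamma, \qquad \alpha\circ\beta=\beta\circ\alpha,\\
\gamma\circ\nu = \nu\circ(\gamma\otimes\gamma), \qquad \delta\circ\gamma = (\gamma\otimes\gamma)\circ\delta.
\end{gather*}
Every identity below is obtained by repeatedly pushing $\gamma$ through $\alpha$, $\beta$, $\nu$ and $\delta$ by means of these five relations, so the whole proof is bookkeeping once the compatibility is understood.

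For the Hom-Lie algebra part, in each of the three triples the relevant data is either $(\g,\nu,\alpha)$ (untouched) or $(\g,\gamma\circ\nu,\gamma\circ\alpha)$, and the latter is exactly the Yau twist $\g_\gamma$, hence a Hom-Lie algebra by the Yau-twist Proposition recorded above. Dually, the coalgebra part is either $(\g,\delta,\beta)$ or $(\g,\delta\circ\gamma,\gamma\circ\beta)$; for the latter I would verify the co-Jacobi identity directly, the five relations turning $(\delta\gamma\otimes\gamma\beta)\circ\delta\gamma\circ(\id+\sigma+\sigma^2)$ into $(\gamma\otimes\gamma\otimes\gamma)$ precomposed with the original vanishing expression, so it is again a Hom-Lie coalgebra (this is the evident dual of the Yau twist). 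The three remaining ``mixed'' axioms $\beta'\circ\nu'=\nu'\circ(\beta'\otimes\beta')$, $\delta'\circ\alpha'=(\alpha'\otimes\alpha')\circ\delta'$ and $\beta'\circ\alpha'=\alpha'\circ\beta'$ then reduce to $\gamma^2$-level commutations and are immediate.

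The heart of the argument is the compatibility condition \eqref{eq:liebi}, which asserts that $\delta$ is a $1$-cocycle for the adjoint action. For the first triple write the twisted data as $\nu'=\gamma\circ\nu$, $\alpha'=\gamma\circ\alpha$, $\delta'=\delta$, $\beta'=\beta$. Since each summand of the twisted adjoint action $\ad'$ replaces $\alpha$ by $\gamma\circ\alpha$ and a bracket by $\gamma$ applied to it, a direct expansion on $\delta(y)=y^{(1)}\otimes y^{(2)}$ gives $\ad'_{\beta(x)}\,\delta(y) = (\gamma\otimes\gamma)\bigl(\ad_{\beta(x)}\,\delta(y)\bigr)$. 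On the other side, $\delta'\bigl(\nu'(x\otimes y)\bigr) = \delta(\gamma[x,y]) = (\gamma\otimes\gamma)\,\delta([x,y])$ by $\delta\circ\gamma=(\gamma\otimes\gamma)\circ\delta$. Hence both sides of the twisted \eqref{eq:liebi} equal $(\gamma\otimes\gamma)$ applied to the corresponding side of the original identity, and the equation holds.

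The remaining cases are handled the same way: the second triple is the exact dual of the first, obtained by exchanging the roles of $(\nu,\alpha)$ and $(\delta,\beta)$, and the third follows most cheaply by observing that $\gamma$ is again a bialgebra morphism of the first twisted bialgebra $(\g,\gamma\circ\nu,\delta,\gamma\circ\alpha,\beta)$ — a one-line check from the five relations — so that applying the second statement to it produces precisely $(\g,\gamma\circ\nu,\delta\circ\gamma,\gamma\circ\alpha,\gamma\circ\beta)$. I expect the only genuine obstacle to be the bookkeeping of the $\gamma$'s inside the two $\ad$-terms of \eqref{eq:liebi} when the structure maps are themselves twisted; once the factorization $\ad'_{\beta'(x)}\,\delta'(y)=(\gamma\otimes\gamma)\,\ad_{\beta(x)}\,\delta(y)$ is established, the rest is automatic.
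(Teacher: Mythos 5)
Your proposal is correct and is precisely the direct verification the paper compresses into ``Straightforward calculation'': the algebra part is the Yau twist, the coalgebra part its evident dual, and the key factorization $(\gamma\otimes\gamma)$ applied termwise to both sides of \eqref{eq:liebi} handles the compatibility. Your derivation of the third triple by observing that $\gamma$ remains a bialgebra morphism of the first twist and then applying the second statement is a nice economy the paper does not spell out (the only cosmetic slip is that the twisted co-Jacobi expression factors through $(\gamma^2)^{\otimes 3}$ post-composed with the original one, not $\gamma^{\otimes 3}$ precomposed, which affects nothing).
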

\begin{proof}
	Straightforward calculation.
\end{proof}

\section{\texorpdfstring{$\alpha$}{alpha}-type cohomology for  Hom-Lie algebras}\label{sc:coho}

Similarly to \citep{homhcoho}, we define a cohomology for Hom-Lie algebras. Let $(\g,\nu,\alpha)$ be  a	Hom-Lie algebra and $(M,\beta)$ be  a $\g$-module. We denote by $\Lambda^k \g$ the $k$-th exterior power of $\g$.
Then the  complex for the cohomology of $\g$ with values in $M$ is given by 
\begin{equation}
	\aLC^n(\g,M) = \aC^n_\nu(\g,M) \oplus \aC^n_\alpha(\g,M) = \Hom(\Lambda^{n} \g,M) \oplus \Hom(\Lambda^ {n-1} \g,M). 
\end{equation}
Here $\Hom(\Lambda^0 \g,M)$ is set to be $\{0\}$, instead of $\K$ as usual, since otherwise $\alpha^{-1}$ would be needed in the definition of the differential.
We write $(\phi, \psi)$ or $\phi + \psi$ with $\phi \in \aC^\bullet_\nu(\g,M)$ and $\psi \in \aC^\bullet_\alpha(\g,M)$  for an element in $\aLC^\bullet(\g,M)$.

We define four maps, with domain and range given in the following diagram:
\begin{center}
	\begin{tikzpicture}
		\matrix (m) [matrix of math nodes,row sep=3em,column sep=5em,minimum width=2em] {
			\aC^n_\nu & \aC^{n+1}_\nu  \\
			\aC^n_\alpha & \aC^{n+1}_\alpha \\
		};  
		\path[->,auto] (m-2-1) edge node[swap]{$\delaa$}                  (m-2-2);
		\path[->,above] (m-2-1) edge node[below] {$\delam$}                  (m-1-2);
		\path[->,below] (m-1-1) edge node[above] {$\delma$}                  (m-2-2);
		\path[->,auto] (m-1-1) edge node {$\delmm$}                  (m-1-2);
		\path   (m-2-1) edge[draw=none]   node [sloped] {$\oplus$} (m-1-1);
		\path   (m-2-2) edge[draw=none]   node [sloped] {$\oplus$} (m-1-2);
	\end{tikzpicture}
\end{center}
\begin{align}
	(\delmm \phi)(x_1,\dots,x_{n+1}) & = \sum_{i=1}^{n+1}  (-1)^{i+1}\alpha^{n-1}(x_i) \cdot \phi(x_1,\dots,x_{n+1})                                     \\
	                                 & - \sum_{i<j}(-1)^{i+j-1} \phi([x_i,x_j],\alpha(x_1 ),\dots,\hat x_i,\hat x_j,\dots,\alpha(x_{n+1}))    \nonumber                   \\
	(\delaa \psi)(x_1,\dots,x_{n})   & = \sum_{i=1}^{n}  (-1)^{i+1} \alpha^{n-1}(x_i) \cdot \psi(x_1,\dots,x_{n})                                        \\
	                                 & - \sum_{i<j}(-1)^{i+j-1} \psi([x_i,x_j],\alpha(x_1 ),\dots,\hat x_i,\hat x_j,\dots,\alpha(x_n))       \nonumber                \\
	(\delma \phi) (x_1,\ldots, x_n)    & = \beta( \phi(x_1,\ldots, x_n) ) - \phi(\alpha(x_1),\ldots,\alpha( x_n))                                          \\
	(\delam \psi)(x_1,\ldots, x_{n+1}) & = \sum_{i\leq j} (-1)^{i+j-1} [ \alpha^{n-2}(x_i),\alpha^{n-2}(x_j)] \cdot \psi(x_1,\dots,\hat x_i,\hat x_j,x_{n+1}), 
\end{align}
where $x_1,\dots, x_{n+1} \in \g$. 

The sign  given by $(-1)^\cdot$ is always determined by the permutation of the $x_i$.

We have the following main theorem.
\begin{theorem}
	Let $(\g,\nu,\alpha)$ be a Hom-Lie algebra and $(M,\beta)$ be a $\g$-module. 
	Further let 
$\del: \aLC^n(\g,M) \to \aLC^{n+1}(\g,M)$ be a map defined by $\del(\phi,\psi) = (\delmm \phi -  \delam \psi , \delma \phi - \delaa \psi)$. Then the pair $(\aLC^\bullet(\g,M),\del)$ is a cohomology complex.
\end{theorem}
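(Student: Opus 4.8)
The plan is to prove $\del\circ\del=0$ by organizing the four building blocks into a single operator matrix. Writing a cochain as a column vector $(\phi,\psi)^{\mathsf T}$ with $\phi\in\aC^\bullet_\nu$ and $\psi\in\aC^\bullet_\alpha$, the differential is
\[
  \del=\begin{pmatrix}\delmm & -\delam\\ \delma & -\delaa\end{pmatrix},
\]
so $\del^2$ is the matrix with entries $\delmm\delmm-\delam\delma$, $-\delmm\delam+\delam\delaa$, $\delma\delmm-\delaa\delma$ and $-\delma\delam+\delaa\delaa$. Thus the theorem is equivalent to the four identities
\begin{align}
  \delmm\circ\delmm &= \delam\circ\delma, & \delmm\circ\delam &= \delam\circ\delaa,\\
  \delma\circ\delmm &= \delaa\circ\delma, & \delaa\circ\delaa &= \delma\circ\delam,
\end{align}
which I would check one bidegree at a time. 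As a sanity check, for $\alpha=\beta=\id$ the operator $\delma$ vanishes identically while $\delmm$ and $\delaa$ become the classical Chevalley--Eilenberg differential, so all four identities degenerate to $d_{\mathrm{CE}}^2=0$; the Hom-case is a twisted refinement of this.

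I would first dispose of the two off-diagonal identities $\delmm\circ\delam=\delam\circ\delaa$ and $\delma\circ\delmm=\delaa\circ\delma$, which say that $\delam$ and $\delma$ are chain maps intertwining the $\delaa$- and $\delmm$-differentials. Since $\delma\phi=\beta\circ\phi-\phi\circ\alpha^{\otimes n}$ is assembled from the two Hom-twisting operations, these follow from multiplicativity alone: the relations $[\alpha(x),\alpha(y)]=\alpha([x,y])$ and $\beta(x\cdot v)=\alpha(x)\cdot\beta(v)$, combined with reindexing the sums and using skew-symmetry to pass between the $i<j$ and $i\le j$ ranges. The only delicate point is bookkeeping the powers $\alpha^{n-1}$, which depend on the cohomological degree, so that the $\alpha$-exponents agree on both sides.

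The heart of the matter is the diagonal pair $\delmm\circ\delmm=\delam\circ\delma$ and $\delaa\circ\delaa=\delma\circ\delam$. Writing $\delmm=L+D$ with $L$ the action part $\sum(-1)^{i+1}\alpha^{n-1}(x_i)\cdot\phi(\dots)$ and $D$ the bracket part $-\sum_{i<j}(-1)^{i+j-1}\phi([x_i,x_j],\dots)$, I would expand $\delmm^2=L^2+LD+DL+D^2$. The antisymmetrized action--action terms $L^2$ are controlled by the module axiom $[x,y]\cdot\beta(v)=\alpha(x)\cdot(y\cdot v)-\alpha(y)\cdot(x\cdot v)$, which turns a pair $\alpha^{n}(x_i)\cdot(\alpha^{n-1}(x_j)\cdot\phi)$ into $[\alpha^{n-1}(x_i),\alpha^{n-1}(x_j)]\cdot\beta(\phi)$; the bracket--bracket terms $D^2$ are governed by the Hom-Jacobi identity; and the mixed terms $LD+DL$ cancel in pairs once multiplicativity is used to move the $\alpha$-twists past the inserted bracket. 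In the classical case all three groups cancel outright; in the Hom-case the $\alpha$-twist obstructs total cancellation and leaves a residue. The crucial observation is that this residue reassembles exactly into $\delam\circ\delma$: the surviving $L^2$ terms produce $\delam(\beta\circ\phi)$ and the surviving $D^2$ terms produce $-\delam(\phi\circ\alpha^{\otimes n})$, which are precisely the two summands of $\delam\circ\delma$, with the $\alpha$-powers matching because $\delam$ at degree $n+1$ carries exactly $\alpha^{n-1}$. I expect this recombination --- identifying which leftover Jacobi and action terms fuse into the ``insert-a-bracket'' operator $\delam$ applied to the twist-difference $\delma$, with correct signs --- to be the main obstacle, and the step most sensitive to the precise normalization of $\alpha$-exponents. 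The second diagonal identity $\delaa\circ\delaa=\delma\circ\delam$ is then proved by the same computation one level down, since $\delaa$ has the same shape as $\delmm$.

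Finally, I would record the conceptual reason behind these identities, foreshadowed in \cref{sc:linfty}: there $(\nu,\alpha)$ is a Maurer--Cartan element $\pi$, the differential is $\del=[\pi,-]$, and $\del^2=\tfrac12[[\pi,\pi],-]=0$ because the Maurer--Cartan equation $[\pi,\pi]=0$ encodes exactly the Hom-Jacobi identity together with multiplicativity. This explains structurally why the four identities must hold, but as that machinery is developed only later I would keep the present proof self-contained through the direct verification above.
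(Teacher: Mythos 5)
Your reduction to the four identities $\delmm\delmm=\delam\delma$, $\delmm\delam=\delam\delaa$, $\delma\delmm=\delaa\delma$ and $\delaa\delaa=\delma\delam$ is exactly the paper's proof strategy, and your account of the diagonal pair (module axiom for the action--action terms, Hom-Jacobi for the nested bracket insertions, the residue reassembling into $\delam$ applied to the twist-difference) matches the paper's computation in structure. Two of your cancellation claims are misstated, however. First, the off-diagonal identity $\delmm\delam=\delam\delaa$ does \emph{not} follow from multiplicativity alone: expanding $\delam\delaa\psi$ produces composed action terms of the shape $\alpha^{\bullet}([x_i,x_j])\cdot\bigl(\alpha^{\bullet}(x_k)\cdot\psi(\dots)\bigr)$, and the paper kills these using the module axiom together with the Hom-Jacobi identity; only $\delma\delmm=\delaa\delma$ is a pure multiplicativity statement. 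Second, your ledger for $\delmm\delmm\phi$ assigns the residue $-\delam(\phi\circ\alpha^{\otimes n})$ to the $D^2$ terms, but in the actual expansion the bracket--bracket terms cancel completely (skew-symmetry for disjoint index pairs, Hom-Jacobi for the nested insertions $\phi([[x_i,x_j],\alpha(x_k)],\dots)$); the surviving term $\sum_{i<j}\pm\,\alpha^{n-1}([x_i,x_j])\cdot\phi(\alpha(x_1),\dots,\alpha(x_n))$ arises instead from the mixed term in which the outer bracket-insertion feeds $[x_i,x_j]$ into the \emph{action} part of the inner differential, while the genuinely cancelling mixed terms are those where the outer action $\alpha^n(x_k)\cdot$ meets the inner insertion and vice versa. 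Neither slip is fatal --- the four identities are the right ones and every tool you invoke (module axiom, Hom-Jacobi, multiplicativity, skew-symmetry) is the complete toolset --- but as written those two steps would not close, and carrying out the computation would force you to re-sort the terms exactly as above.
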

\begin{proof}
	This is a straightforward calculation. One has to take care of the signs, but as stated above most of the sign come from  permutations of the $x_i$. We will omit this in the following, and simply write $\pm$, since this simplifies the formulas and the correct sign is easy to obtain. Further in the sum over $i,j,\dots$ the elements corresponding to $x_i,x_j,\dots$  are omitted from the sequence $x_1, \dots, x_n$, such that each $x_i$ appears once in each expression.
	\begin{align}
		\delmm (\delmm \phi) (x_1,\dots, x_n) &= \sum_{i=1}^{n-1} \sum_{j=1, i\neq j}^{n-1}   \pm \alpha^{n}(x_i) \cdot (\alpha^{n-1} (x_j) \cdot \phi(x_1, \dots , x_n ) ) \label{eq:lico1}\\
		&- \sum_{i} \sum_{j<k, j,k \neq i} \pm \alpha^n (x_i) \cdot \phi([x_j,x_k], \alpha(x_1), \dots, \alpha(x_n))\label{eq:lico2} \\
		&-  \sum_{i<j} \sum_{k\neq i,j} \pm \alpha^n(x_k) \cdot \phi([x_i,x_j], \alpha(x_1), \dots, \alpha(x_n))\label{eq:lico3} \\
		&- \sum_{i<j}\pm \alpha^{n-1} ([x_i,x_j]) \cdot \phi(\alpha(x_1) ,\dots ,\alpha(x_n) ) \label{eq:lico4}\\
		&+  \sum_{i<j} \sum_{k<l,k,l \neq i,j} \pm \phi(\alpha[x_k,x_l],\alpha([x_i,x_j]), \alpha^2(x_1), \dots, \alpha^2(x_n) ) \label{eq:lico5}\\
		&+ \sum_{i<j} \sum_{k\neq i,j} \pm  \phi([[x_i,x_j],\alpha(x_k)], \alpha^2(x_1), \dots, \alpha^2(x_n)) \label{eq:lico6}\\
		&= \sum_{i<j} \sum_{i<j} \pm \alpha^{n-1} ([x_i,x_j]) \cdot \phi(\alpha(x_1) ,\dots ,\alpha(x_n) ) \label{eq:lico7} \\
		&-  \sum_{i<j} \pm \alpha^{n-1} ([x_i,x_j]) \cdot \beta \phi(x_1 ,\dots , x_n) \label{eq:lico8}.
	\end{align}		
	The terms~\eqref{eq:lico2} and~\eqref{eq:lico3}  cancel each other and~\eqref{eq:lico5} cancel itself due to antisymmetry, and~\eqref{eq:lico6}  due to the Hom-Jacobi identity.  \Cref{eq:lico1} gives the term~\eqref{eq:lico8} by the Hom-Jacobi identity and~\eqref{eq:lico4} is equal to~\eqref{eq:lico7}.
It is easy to see that this is the same as $\delam \delma \phi$. 

One can compute $\delaa( \delaa \psi)$ similar to 	$\delmm (\delmm \phi)$  and gets 
\begin{align*}
	 \sum_{i<j} \sum_{i<j} \alpha^{n} ([x_i,x_j]) \cdot \phi(\alpha(x_1) ,\dots )  -  \sum_{i<j} \alpha^{n} ([x_i,x_j]) \cdot \beta \phi(x_1 ,\dots ).
\end{align*}
This is the same as $\delma \delam \psi$.
It remains to show that  $\delma \delmm = \delaa \delma$ and $\delam \delaa = \delmm \delma$.  
We compute 
\begin{align*}
	\delma \delmm \phi (x_1,\dots, x_{n+1}) &= \sum_{i=1}^n (-1)^{i-1}\alpha^n(x_i) \cdot \beta \phi (x_1, \dots,x_{n+1})\\
	&- \sum_{i=1}^n (-1)^{i-1} \alpha^n (x_i) \cdot \phi(\alpha(x_1), \dots,\alpha(x_{n+1})) \\
	& - \sum_{i<j} (-1)^{i+j-1} \phi(\alpha([x_i,x_j]),\alpha^2(x_1), \dots ,\alpha^2(x_{n+1}) )  \\
	&+  \sum_{i<j} (-1)^{i+j-1}  \beta\phi([x_i,x_j],\alpha(x_1), \dots, \alpha(x_{n+1})) \\
	&= \delaa \delma \phi (x_1,\dots, x_{n+1})
\end{align*}
and 
\begin{align*}
	\delam \delaa \psi(x_1,\dots, x_{n}) &= \sum_{i<j}\sum_{k \neq i,j} \pm \alpha^{n-1}([x_i,x_j] \cdot \alpha^{n-1}([x_i,x_j],)) \cdot \psi(x_1, \dots, x_n) \\
	&-  \sum_{i<j} \sum_{k<l,k,l\neq i,j} \pm\alpha^{n-1}([x_i,x_j]) \cdot \psi([x_k,x_l], \alpha(x_1), \dots, \alpha(x_n)).
\end{align*}
The first term is zero, which can be seen using the module structure and the Hom-Jacobi identity.
It is also easy to see that $\delmm \delam \psi$ gives the same.
\end{proof}

So we can define: 
\begin{defn}
	The cohomology defined by the $\alpha$-type Chevalley-Eilenberg complex $(\aLC(\g,M),\del)$ is called $\alpha$-type  cohomology of $\g$ with values in $M$ and denoted by $\aLH(\g,M)$.
\end{defn}

One can of course specialize this to define the $\alpha$-type cohomology of a Hom-Lie algebra with values in itself, where the action is given by the adjoint action.
So let $(\g,\nu,\alpha)$ be a Hom-Lie algebra then we define 
\begin{equation}
	\aLC^n(\g) = \Hom(\Lambda^n \g,\g) \oplus \Hom(\Lambda^{n-1}\g,\g)
\end{equation}
and maps 
\begin{align}
	\begin{split}
	\delmm \phi(x_1,\dots,x_{n+1}) & = \sum_{i=1}^{n+1}  (-1)^{i+1}[\alpha^{n-1}(x_i), \phi(x_1,\dots,x_{n+1})]                                   \\
																	 & - \sum_{i<j}(-1)^{i+j-1} \phi([x_i,x_j],\alpha(x_1 ),\dots,\hat x_i,\hat x_j,\dots,\alpha(x_n)),                 
	\end{split}\\
	\begin{split}
	\delaa \psi(x_1,\dots,x_{n})   & = \sum_{i=1}^{n}  (-1)^{i+1}[\alpha^{n-1}(x_i), \psi(x_1,\dots,x_{n})]                                       \\
																	 & - \sum_{i<j}(-1)^{i+j-1} \psi([x_i,x_j],\alpha(x_1 ),\dots,\hat x_i,\hat x_j,\dots,\alpha(x_n)),                  
	\end{split}\\
	\delma \phi (x_1,\ldots, x_n)    & = \alpha( \phi(x_1,\ldots, x_n) ) - \phi(\alpha(x_1),\ldots,\alpha( x_n)),                                    \\
	\delam \psi(x_1,\ldots, x_{n+1}) & = \sum_{i\leq j} (-1)^{i+j-1} [[ \alpha^{n-2}(x_i),\alpha^{n-2}(x_j)],\psi(x_1,\dots,\hat x_i,\hat x_j,x_{n+1})] .
\end{align}
The differential $\del$ can now be defined as before by $\del(\phi,\psi) = (\delmm \phi -  \delam \psi , \delma \phi - \delaa \psi)$.

This complex is called $\alpha$-type Chevalley-Eilenberg complex of $\g$ with values in itself and its cohomology is denoted by $\aLH(\g,\g)$.

\begin{remark}\label{rm:symdiff}
	Note that since $\phi$ is completely skewsymmetric, we can write 
	\begin{align*}
		\delam \psi( x_1, \dots,x_{n+1}) &= \sum_{\sigma \in S_{n+1}} \sign(\sigma) \frac{1}{2 \cdot(n-1)!} \alpha^{n-2}([x_{\sigma(1)},x_{\sigma(2)}]) \psi(x_{\sigma(3)},\dots, x_{\sigma(n+1)}), \\
		\delmm \phi ( x_1, \dots,x_{n+1})&= \sum_{\sigma \in S_{n+1}} \sign(\sigma) \frac{1}{n!} \big(\alpha^{n-1}(x_{\sigma(1)}) \cdot \phi(x_{\sigma(2)}, \dots, x_{\sigma(n+1)}) \\
		& - \frac{1}{2} \phi([x_{\sigma(1)},x_{\sigma(2)}],x_{\sigma(3)},\dots, x_{\sigma(n+1)}) \big) ,\\
		\delaa \psi ( x_1, \dots,x_{n})&= \sum_{\sigma \in S_{n}} \sign(\sigma) \frac{1}{(n-1)!}  \big( \alpha^{n-1}(x_{\sigma(1)}) \cdot \psi(x_{\sigma(2)}, \dots, x_{\sigma(n)})\\
		& - \frac{1}{2} \psi([x_{\sigma(1)},x_{\sigma(2)}],x_{\sigma(3)},\dots, x_{\sigma(n)}) \big).
	\end{align*}	
\end{remark}

Let $A$ be a Hom-associative algebra and $M$ be an $A$-bimodule. We consider the $\alpha$-type Hochschild cohomology defined in \citep{homhcoho}. Then we can define a map $\Phi: \aHC^\bullet(A,M) \to \aLC^\bullet(A_L,M_L)$ by 
\begin{equation}
	\Phi(\phi)(x_1,\dots,x_n) = \sum_{\sigma \in S_n} \sign(\sigma) \phi(x_{\sigma(1)}, \dots,x_{\sigma(n)}).
\end{equation}
Here $S_n$ denotes the symmetric group and $\sign(\sigma)$ the signature of a permutation $\sigma$.

\begin{theorem}
	The map $\Phi$ is a surjective chain map. So it induces a map in cohomology. 
\end{theorem}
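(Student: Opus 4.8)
The plan is to exploit the fact that both the $\alpha$-type Hochschild differential of \citep{homhcoho} and the $\alpha$-type Chevalley--Eilenberg differential defined above share the same $2\times 2$ block structure, and that $\Phi$ acts diagonally with respect to it: its first component $\Phi_\nu$ skew-symmetrises a map $A^{\otimes n}\to M$ into a map $\Lambda^n\g\to M$, and its second component $\Phi_\alpha$ does the same with $n-1$ arguments. Writing the Hochschild differential as $(\phi,\psi)\mapsto(\del_{\mu\mu}\phi-\del_{\mu\alpha}\psi,\ \del_{\alpha\mu}\phi-\del_{\alpha\alpha}\psi)$ in the evident analogy with $\del(\phi,\psi)=(\delmm\phi-\delam\psi,\ \delma\phi-\delaa\psi)$, the chain map identity $\del\circ\Phi=\Phi\circ\del$ splits into the four corner identities $\delmm\Phi_\nu=\Phi_\nu\del_{\mu\mu}$, $\delaa\Phi_\alpha=\Phi_\alpha\del_{\alpha\alpha}$, $\delam\Phi_\alpha=\Phi_\nu\del_{\mu\alpha}$ and $\delma\Phi_\nu=\Phi_\alpha\del_{\alpha\mu}$, which I would verify one at a time.

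The two diagonal identities are the Hom-analogue of the classical statement that skew-symmetrising the Hochschild coboundary produces the Chevalley--Eilenberg coboundary, and here the symmetrised formulas of \cref{rm:symdiff} are exactly the target form. After summing over $S_{n+1}$ with signs, the inner face terms $\phi(\dots,a_ia_{i+1},\dots)$ pair up under the transposition $(i,i+1)$ so that $a_ia_{i+1}$ and $a_{i+1}a_i$ enter with opposite signs and assemble into the bracket $[a_i,a_{i+1}]$, reproducing the $\tfrac12\phi([x_{\sigma(1)},x_{\sigma(2)}],\dots)$ term, while the two boundary action terms of shape $\alpha^{n-1}(a_0)\cdot\phi(\dots)$ and $\pm\,\phi(\dots)\cdot\alpha^{n-1}(a_n)$ combine into the commutator action $a\cdot_L m=a\cdot m-m\cdot a$ defining $M_L$. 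The identity for $\delaa$ versus $\del_{\alpha\alpha}$ is the same computation with one fewer argument. The corner $\delma\Phi_\nu=\Phi_\alpha\del_{\alpha\mu}$ is immediate, since on both sides the relevant map is $\phi\mapsto\beta\circ\phi-\phi\circ\alpha^{\otimes n}$, and $\alpha^{\otimes n}$ commutes with permutations of the arguments while $\beta$ only touches the output, so the skew-symmetrisation passes through unchanged.

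The step I expect to be the genuine obstacle is the connecting identity $\delam\Phi_\alpha=\Phi_\nu\del_{\mu\alpha}$. Here the Hochschild map $\del_{\mu\alpha}$ inserts a product into the distinguished slot of $\psi$, whereas $\delam$ inserts a bracket $[\alpha^{n-2}(x_i),\alpha^{n-2}(x_j)]$; making these agree after skew-symmetrisation requires carefully tracking how the $\alpha$-powers are distributed over the remaining arguments, and how the $i\le j$ range of $\delam$ (the symmetric, not strictly ordered, summation appearing in \cref{rm:symdiff}) arises from symmetrising a single product insertion. The combinatorial factor $\frac{1}{2(n-1)!}$ in the symmetrised form of $\delam$ in \cref{rm:symdiff} is precisely the bookkeeping one must reproduce, and I would use that rewriting to organise the sign count rather than expand the raw definition.

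Finally, surjectivity is elementary in characteristic zero and I would dispatch it separately. The space $\Hom(\Lambda^n\g,M)$ sits inside $\Hom(\g^{\otimes n},M)$ as the skew-symmetric maps, and if $\omega$ is already skew-symmetric then $\omega(x_{\sigma(1)},\dots,x_{\sigma(n)})=\sign(\sigma)\,\omega(x_1,\dots,x_n)$, so $\Phi(\omega)=\sum_{\sigma\in S_n}\sign(\sigma)^2\,\omega=n!\,\omega$. Hence $\Phi\bigl(\tfrac{1}{n!}\omega\bigr)=\omega$ in each block, so $\Phi$ is surjective; together with the chain map property this shows that $\Phi$ induces a well-defined map in cohomology.
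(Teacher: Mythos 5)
Your proposal is correct and follows essentially the same route as the paper's proof: a corner-by-corner verification of $\del\circ\Phi=\Phi\circ\del$ against the block structure of the two differentials, with the $\delma$ corner immediate, the $\delmm$ versus $\d_{\mu\mu}$ corner handled by pairing the face terms $\phi(\dots,x_ix_{i+1},\dots)$ under transpositions into brackets and merging the two outer terms into the commutator action $\cdot_L$, all organised via the symmetrised formulas of \cref{rm:symdiff}. Your explicit surjectivity argument ($\Phi(\omega)=n!\,\omega$ for skew-symmetric $\omega$, valid in characteristic zero) is sound and in fact more detailed than the paper, which merely asserts surjectivity is clear, just as it dismisses the remaining corners as ``similar'' where you at least flag the bookkeeping needed for the $\delam$ identity.
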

\begin{proof}
	It is clear that it is surjective.
	We have to check $\Phi \d = \del \Phi$, where $\d$ denotes the $\alpha$-type  Hochschild differential.
	Clearly $\Phi \d_{\mu\alpha} = \delma \Phi$. Further, we have
	\begin{align}
		\Phi \d_{\mu\mu} \phi(x_1, \dots, x_{n+1}) & =   \sum_{\sigma \in S_n} \sign(\sigma) \Big(\alpha^{n-1}( x_{\sigma(1)}) \phi(x_{\sigma(2)}, \dots,x_{\sigma(n+1)}) \\
		                                        & + (-1)^{n}\phi(x_{\sigma(1)}, \dots,x_{\sigma(n)}) \alpha^{n-1}( x_{\sigma(n+1)})                                        \\
		                                        & +\sum_i (-1)^i \phi( \alpha(x_{\sigma(1)}), \dots,x_{\sigma(i)}x_{\sigma(i+1)} ,\dots, \alpha(x_{\sigma(n+1)} ) )  \Big)         
		\\
		                                        & =  \sum_{i=1}^n \sum_{\sigma \in \tilde S_n} \sign(\sigma) \Big( \alpha^{n-1}(x_i) \cdot_L \phi(x_{\sigma(1)},\dots,\hat x_i \dots,x_{\sigma(n)})\\
																						& +\sum_i \frac{1}{2}\phi( \alpha(x_{\sigma(1)}), \dots,[x_{\sigma(i)},x_{\sigma(i+1)} ],\dots, \alpha(x_{\sigma(n+1)}) ) \Big),          
	\end{align}
where $\tilde S_n$ is the symmetric group on $\{1, \dots,\dots , n+1\} \setminus \{i\}$. 
	Using \Cref{rm:symdiff}  it is easy to see that this equals $\delmm \Phi$. The proof for the other parts of the differential are similar. 
\end{proof}

\begin{remark}
The complex of  Hom-Lie algebras cohomology given in \citep{homcoho} is a subcomplex  of the  $\alpha$-type Chevalley-Eilenberg complex  defined here. 
It is spanned by cocycles of the form $(\phi,0)$.  To get that $\del( \phi,0)$ is again of this form, we need $\delma \phi =0$. So we define the complex $\LC^n(A) = \{ \phi \in \Hom(\Lambda^n \g,g) |\alpha \phi = \phi \alpha^{\otimes n}\}$.  The map $\delmm$ defines a differential on this complex.  This is precisely the complex given in \citep{homcoho}.

\end{remark}

\subsection{Cohomology for Hom-Lie algebras of Lie-type}

In this section, we aim to study the cohomology of Hom-Lie algebras obtained by Yau twist of a Lie algebra together with an  endomorphism. We show a relationship between the cohomology of a Lie algebra morphism and the $\alpha$-type Chevalley-Eilenberg cohomology.

The cohomology for a homomorphism between two Lie algebras has been  studied in \citep{nijenhuismor, fregier1,fregier2}, see \citep{Arfa} for the Hom-case. Here we want to modify this to the case of an endomorphism.
So let $\g$ be a Lie algebra and $\gamma:\g \to \g$  be an endomorphism. Then the usual complex restricts to  
\begin{equation}
	\eC^n(\gamma):= \eC^n(\g,\gamma) :=  \eC^n_\nu(\g,\gamma) \oplus  \eC^n_\gamma(\g,\gamma) = \Hom(\Lambda^n \g, \g) \oplus \Hom(\Lambda^{n-1}\g,\g)
\end{equation}
for all $n  \in \Z$.
We denote an element in $\eC^n(\g,\gamma)$ by a pair $(\phi,\psi)$. The differential is defined by $\delta(\phi , \psi) = ( \del_{CE} \phi, -\del_{CE} \psi + \del_\gamma \phi)$, where $\del_{CE}$ is the usual Chevalley-Eilenberg differential and $\del_\gamma \phi = \gamma \phi - \phi \gamma^{\otimes k}$ for $\phi \in \Hom(\Lambda^k \g,\g)$.

We set $\Hom(\Lambda^n \g,\g) = 0$ for $n \leq 0$. So we also set $\Hom(\Lambda^0 \g,\g)$ to zero, since this way it agrees with the complex for the $\alpha$-type cohomology.

Now,  we regard the Hom-Lie algebra $\g_\gamma$ obtained by Yau twist of $\g$ by $\gamma$. We define a linear map $\Phi:C^\bullet(\g,\gamma) \to \aLC^\bullet(\g_\gamma)$ by 
\begin{equation}
	(\phi,\psi) \mapsto ( \gamma^{n-1} \phi +  \gamma^{n-2} \psi \circ \nu, \gamma^{n-2} \psi), 
\end{equation}
where 
$$(\psi \circ \nu)(x_1,\dots,x_n) = \psi(\nu \wedge \id^{\wedge n-2})(x_1,\dots,x_n)= \sum_{i < j}(-1)^{i+j-1} \psi([x_i,x_j], x_1, \dots, \hat x_i ,\hat x_j, \dots x_n)$$	
for $\phi \in C^n_\nu(\gamma)$ and $\psi \in C^n_\gamma(\gamma)$. Then we get the following 
\begin{theorem}\label{th:morph}
	Let $\g$ be a Lie algebra and $\gamma: \g \to \g$ be an endomorphism. Then $\Phi$ defined as above is chain map, which is an isomorphism if $\gamma$ is invertible. So in particular it induces a homomorphism in cohomology.
\end{theorem}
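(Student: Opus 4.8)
The plan is to prove the two assertions separately: first that $\Phi$ is a chain map, then that it is bijective when $\gamma$ is invertible. Since both $\eC^\bullet(\g,\gamma)$ and $\aLC^\bullet(\g_\gamma)$ split as $\Hom(\Lambda^n\g,\g)\oplus\Hom(\Lambda^{n-1}\g,\g)$, and $\Phi$, $\delta$ and $\del$ are all given by explicit componentwise formulas, I would verify $\Phi\delta=\del\Phi$ by comparing the two components of $\Phi\delta(\phi,\psi)$ and $\del\Phi(\phi,\psi)$ for an arbitrary pair $(\phi,\psi)\in\eC^n(\g,\gamma)$. Writing $\Phi_1=\gamma^{n-1}\phi+\gamma^{n-2}\psi\circ\nu$ and $\Phi_2=\gamma^{n-2}\psi$, the two identities to establish are
\[
\delmm\Phi_1-\delam\Phi_2=\gamma^{n}\del_{CE}\phi+\gamma^{n-1}(-\del_{CE}\psi+\del_\gamma\phi)\circ\nu
\]
in the $\nu$-component and
\[
\delma\Phi_1-\delaa\Phi_2=\gamma^{n-1}(-\del_{CE}\psi+\del_\gamma\phi)
\]
in the $\alpha$-component.

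The backbone of the computation is a short list of intertwining identities between the ordinary Chevalley--Eilenberg differential $\del_{CE}$ of $\g$ and the $\alpha$-type differentials of the Yau twist $\g_\gamma$. All of them rest on the single fact that $\gamma$ is a Lie-algebra endomorphism, so that in $\g_\gamma$ the bracket is $\gamma\circ\nu$, the structure map is $\gamma$, and $\gamma[x,y]=[\gamma x,\gamma y]$. The clean ones I would record first are: (a) for $\phi\in\Hom(\Lambda^n\g,\g)$,
\[
\delmm(\gamma^{n-1}\phi)=\gamma^{n}\del_{CE}\phi+\gamma^{n-1}(\del_\gamma\phi)\circ\nu,
\]
which follows by pulling the common power $\gamma^{n-1}$ out of the linear term (both $x_i$ and $\phi(\dots)$ carry $\gamma^{n-1}$) and rewriting $\phi\circ\gamma^{\otimes n}=\gamma\phi-\del_\gamma\phi$ in the quadratic term; (b) $\delma$ coincides with $\del_\gamma$, and $\del_\gamma$ commutes with post-composition by powers of $\gamma$ and with pre-composition by $\nu$, i.e. $\del_\gamma(\gamma^{k}\omega)=\gamma^{k}\del_\gamma\omega$ and $\del_\gamma(\omega\circ\nu)=(\del_\gamma\omega)\circ\nu$; and (c) the decomposition $\del_{CE}\psi=L\psi-\psi\circ\nu$, where $L\psi=\sum_i(-1)^{i+1}[x_i,\psi(x_1,\dots,\hat x_i,\dots)]$ is the linear part, which is what lets the various $\psi\circ\nu$ contributions regroup.

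With these in hand the $\alpha$-component identity reduces, after substituting (b) for $\delma\Phi_1$ and expanding $\delaa(\gamma^{n-2}\psi)$ by the same pull-out-a-power computation as in (a), to a single intertwining statement for $\delaa$: the terms involving $(\del_\gamma\psi)\circ\nu$ cancel, and what remains matches $\gamma^{n-1}(-\del_{CE}\psi+\del_\gamma\phi)$ precisely because of (c). The $\nu$-component is the harder half: once (a) absorbs the $\gamma^{n}\del_{CE}\phi$ and $\gamma^{n-1}(\del_\gamma\phi)\circ\nu$ pieces, one is left to show that the purely $\psi$-dependent remainder $\delmm(\gamma^{n-2}\psi\circ\nu)-\delam(\gamma^{n-2}\psi)$ equals $-\gamma^{n-1}(\del_{CE}\psi)\circ\nu$. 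This is where the Jacobi identity of $\g$ enters, through the double-bracket term $[[\,\cdot\,,\,\cdot\,],\psi(\dots)]$ of $\delam$: expanding $\delmm(\gamma^{n-2}\psi\circ\nu)$ produces terms of the form $[\gamma x_i,\psi([x_j,x_k],\dots)]$ that must recombine with the $\delam$ term via antisymmetry and Jacobi, mirroring the cancellations \eqref{eq:lico5}--\eqref{eq:lico6} in the proof of the first theorem. I expect this cross-term bookkeeping — tracking both the signs coming from permutations of the $x_i$ and the powers of $\gamma$ distributed across the linear and quadratic parts — to be the main obstacle; everything else is formal.

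Finally, for the isomorphism statement I would observe that $\Phi$ is block upper-triangular: its diagonal blocks are $\phi\mapsto\gamma^{n-1}\circ\phi$ on $\Hom(\Lambda^n\g,\g)$ and $\psi\mapsto\gamma^{n-2}\circ\psi$ on $\Hom(\Lambda^{n-1}\g,\g)$, while the off-diagonal block sends $\psi$ to $\gamma^{n-2}\psi\circ\nu$. If $\gamma$ is invertible then so is each $\gamma^{k}$, hence post-composition with $\gamma^{k}$ is a bijection of the corresponding Hom-space; both diagonal blocks are therefore bijective, and a block-triangular map with invertible diagonal blocks is bijective. Explicitly, the inverse is $(\Theta,\Xi)\mapsto\bigl(\gamma^{-(n-1)}(\Theta-\Xi\circ\nu),\,\gamma^{-(n-2)}\Xi\bigr)$. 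A bijective chain map is an isomorphism of cochain complexes, so $\Phi$ induces an isomorphism on cohomology, which gives the claim; in the non-invertible case the same computation shows $\Phi$ is still a chain map, inducing a homomorphism from the cohomology of $(\eC^\bullet(\g,\gamma),\delta)$ to $\aLH(\g_\gamma)$.
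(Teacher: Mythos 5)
Your overall route is the same as the paper's: a direct componentwise verification that $\Phi\delta=\del\Phi$, together with the explicit block-triangular inverse for invertible $\gamma$. Your identities (a) and (b) are correct and are exactly the inline manipulations the paper performs for the pair $(\phi,0)$, and your inverse formula $(\Theta,\Xi)\mapsto\bigl(\gamma^{-(n-1)}(\Theta-\Xi\circ\nu),\,\gamma^{-(n-2)}\Xi\bigr)$ is right — it even repairs the paper's stated inverse, which omits the $\circ\nu$ in its first component.

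There is, however, one concrete error in your setup, located in identity (c): on the component $\eC^n_\gamma(\g,\gamma)$ the differential $\del_{CE}$ is not the Chevalley--Eilenberg differential for the plain adjoint action, but for the $\gamma$-twisted action $x\cdot y=[\gamma(x),y]$. This is the module the paper denotes $\tilde\g$ in the proof of \Cref{th:simplecoho}, and it is what ``the usual complex restricts to'' from the morphism complex of Nijenhuis--Richardson and Fr\'egier, where the mapping-cone piece carries the action through the morphism. Your own computation forces this twist: expanding $\delaa(\gamma^{n-2}\psi)$ in $\g_\gamma$, whose bracket is $\gamma\circ\nu$, the linear term is
\begin{equation*}
\sum_i(-1)^{i+1}\,\gamma\bigl[\gamma^{n-1}(x_i),\,\gamma^{n-2}\psi(x_1,\dots,\hat x_i,\dots)\bigr]
=\gamma^{n-1}\sum_i(-1)^{i+1}\bigl[\gamma(x_i),\,\psi(x_1,\dots,\hat x_i,\dots)\bigr],
\end{equation*}
so one factor of $\gamma$ on $x_i$ can never be absorbed into the common prefactor $\gamma^{n-1}$. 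With your untwisted $L\psi=\sum_i\pm[x_i,\psi(\dots)]$, the $\alpha$-component identity fails for a general endomorphism, and the analogous terms $[\gamma(x_k),\psi([x_i,x_j],\dots)]$ spoil the matching in the $\nu$-component as well. Replacing $L$ by the twisted operator $L'\psi=\sum_i\pm[\gamma(x_i),\psi(\dots)]$, both components close exactly as you describe, with the Jacobi identity and antisymmetry disposing of the double-bracket cross-terms as in the paper's first theorem; the $\del_\gamma\psi$-terms generated when rewriting $\psi\circ\gamma^{\otimes(n-1)}$ cancel among themselves by antisymmetry and the Jacobi identity. So the plan is sound and essentially the paper's proof, modulo this one correction to the differential on the source complex.
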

\begin{proof}
	One has to show that $\del \Phi (\phi ,\psi) =  \Phi \delta (\phi,\psi)$ for $(\phi, \psi) \in C(\g,\gamma)$, where $\delta$ denotes the differential in $C(\g,\gamma)$ and $\del$ the one in $\aLC(\g_\gamma)$.
	We have
	\begin{align*}
		\Phi(\delta \phi) &(x_1, \dots ,x_{n+1})= \Phi\Big( \sum_i \pm x_i \cdot \phi(x_1,\dots ,x_{n+1}) - \sum_{i<j}\pm \phi([x_i,x_j],x_1,\dots ,x_{n+1}), \\
		& \gamma \phi(x_1,\dots  ,x_{n+1}) - \phi(\gamma(x_1),\dots ,\gamma(x_{n+1}))\Big) \\
		=& \Big(\sum_i\gamma^n(x_i \cdot \phi(x_1,\dots, x_{n+1}) )- \sum_{i<j} \pm \gamma^n(\phi([x_i,x_j],x_1,\dots ,x_{n+1})) \\
		 & + \sum_{i<j} \gamma^n\phi([x_i,x_j],x_1,\dots, x_{n+1}) - \gamma^{n-1} \phi (\gamma([x_i,x_j]),\gamma(x_1),\dots , \gamma(x_{n+1})) , \\
		& \gamma^n \phi(x_1,\dots, x_{n+1}) - \gamma^{n-1} \phi (\gamma(x_1),\dots  , \gamma(x_{n+1})) \Big) \\
		=& \Big(\sum_i\gamma^n(x_i \cdot \phi(x_1,\dots ,x_{n+1}) )- \sum_{i<j}  \gamma^{n-1} \phi (\gamma([x_i,x_j]),\gamma(x_1),\dots ,\gamma(x_{n+1})) , \\ 
		& \gamma^n \phi(x_1,\dots ,x_{n+1}) - \gamma^{n-1} \phi (\gamma(x_1),\dots ,\gamma(x_{n+1})) \Big) \\
		=& \del( \gamma^{n-1} \phi(x_1, \dots ,x_{n+1}),0) = \del \Phi(\phi)(x_1,\dots ,x_{n+1}).
	\end{align*}
	Similarly one can check that $\del \Phi (0 ,\psi) =  \Phi \delta (0,\psi)$. If $\gamma$ is invertible the inverse of $\Phi$ is given by $\Phi^{-1}(\phi,\psi) = (\gamma^{-n+1} \phi - \gamma^{-n+1}\psi ,\gamma^{-n+2}\psi)$.
\end{proof}

We compute the $\alpha$-type cohomology of Lie algebras, viewed as Hom-Lie algebras. 
\begin{theorem}
	Let $(\g,\nu,\id)$ be a Lie algebra considered as a Hom-Lie algebra. Then 
	\begin{equation}
		\aLH^n(\g,\g) =  \LH^n(\g,\g) \oplus \LH^{n-1}(\g,\g).
	\end{equation}
	Here $\LH(\g,\g)$ denotes the ordinary Chevalley-Eilenberg cohomology of $\g$ but with  $\LH^1(\g,\g)$ replaced  by $\Der(\g)$ and $\LH^0(\g,\g)$ by $\{0\}$.
\end{theorem}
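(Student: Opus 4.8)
The plan is to obtain this as the invertible case of \Cref{th:morph}, specialized to $\gamma = \id$. With this choice the Yau twist $\g_{\id} = (\g, \id\circ\nu, \id\circ\id)$ is literally our Lie algebra $(\g,\nu,\id)$ regarded as a Hom-Lie algebra, and $\id$ is certainly invertible, so \Cref{th:morph} provides an \emph{isomorphism of cochain complexes} $\Phi\colon \eC^\bullet(\g,\id) \to \aLC^\bullet(\g)$. In particular it induces an isomorphism $\aLH^\bullet(\g,\g) \cong \aH^\bullet(\eC(\g,\id))$, so it suffices to compute the cohomology of the auxiliary complex $\eC(\g,\id)$ of the endomorphism $\id$.

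This computation is immediate. The differential on $\eC(\g,\gamma)$ is $\delta(\phi,\psi) = (\del_{CE}\phi,\, -\del_{CE}\psi + \del_\gamma\phi)$, where $\del_\gamma\phi = \gamma\phi - \phi\gamma^{\otimes k}$. For $\gamma = \id$ the twisting term $\del_\gamma$ vanishes identically, so $\delta$ becomes block-diagonal, $\delta(\phi,\psi) = (\del_{CE}\phi,\, -\del_{CE}\psi)$. Hence $\eC(\g,\id)$ is the direct sum of the ordinary Chevalley--Eilenberg complex $(\Hom(\Lambda^\bullet\g,\g),\del_{CE})$ and its degree shift by one, the sign on the second summand being irrelevant for cohomology. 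Reading off degree $n$ therefore gives $\aH^n(\eC(\g,\id)) = \LH^n(\g,\g)\oplus\LH^{n-1}(\g,\g)$, which combined with the isomorphism above yields the stated formula.

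I would then verify that the low-degree conventions match. Since $\eC$ uses the convention $\Hom(\Lambda^0\g,\g) = 0$, there are no cochains in degree zero, so $\LH^0 = 0$; and there are no coboundaries into degree one (the relevant degree-zero space is zero), so $\LH^1$ is the full kernel of $\del_{CE}$ on $\Hom(\g,\g)$. That kernel is exactly the derivation condition $\phi([x,y]) = [\phi(x),y] + [x,\phi(y)]$, i.e.\ $\LH^1 = \Der(\g)$, as required.

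The argument has no genuine obstacle beyond bookkeeping: the only points requiring care are confirming that $\g_{\id}$ really is $(\g,\nu,\id)$ and that the grading conventions line up, both of which are direct. If one wanted a self-contained proof avoiding \Cref{th:morph}, one would instead set $\alpha = \id$ directly in the four differentials, obtaining $\delmm = \delaa = \del_{CE}$, $\delma = 0$, and a degree-two operation $\delam$, and organize the resulting upper-triangular differential through the short exact sequence of complexes $0 \to (\Hom(\Lambda^\bullet\g,\g),\del_{CE}) \to \aLC^\bullet(\g) \to (\Hom(\Lambda^{\bullet-1}\g,\g),-\del_{CE}) \to 0$. The crux would then be showing that the connecting homomorphism $[\psi] \mapsto [\delam\psi]$ vanishes, which reduces to the identity $\delam\psi = \del_{CE}(\psi\circ\nu) + (\del_{CE}\psi)\circ\nu$; this is precisely the computation underlying \Cref{th:morph}, so the two routes coincide and I would take the shorter one.
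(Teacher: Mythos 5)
Your proposal is correct and follows essentially the same route as the paper: both apply \Cref{th:morph} with $\gamma=\id$ (so $\Phi$ is an isomorphism of complexes), observe that $\del_\gamma=0$ makes the differential on $\eC^\bullet(\g,\id)$ split into two copies of the ordinary Chevalley--Eilenberg differential in degrees $n$ and $n-1$, and account for the dropped degree-zero cochains by replacing $\LH^0$ with $\{0\}$ and $\LH^1$ with $\Der(\g)$. Your extra verification that $\LH^1=\Der(\g)$ and the sketched self-contained alternative are fine but beyond what the paper records.
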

\begin{proof}
	This follows directly from \Cref{th:morph}, since $\del_\gamma =0$ and the other two parts of the differential are precisely the ordinary Chevalley-Eilenberg differential on $\LC^n(\g,\g)$ and $\LC^{n-1}(\g,\g)$.  One has to replace $\LH^1(\g,\g)$ with $\Der(\g)$ and $\LH^0(\g,\g)$ with $\{0\}$ since our complex starts with $\aLC^1(\g,\g)$. 
\end{proof}

\subsection{Whitehead  Theorem}

In this section we need  the field $\K$ to be of characteristic $0$.
If $\g$ is a simple Lie algebra, the well known Whitehead Lemma states that $\LH(\g,\g)$ is trivial. 
 So the $\alpha$-type Chevalley-Eilenberg cohomology consists of the derivations. 
 Simple Hom-Lie algebras are Yau twists of semi-simple Lie algebras by automorphisms \citep{chen}. In this section we compute the $\alpha$-type Chevalley-Eilenberg cohomology for (finite-dimensional) simple Hom-Lie algebras.

\begin{prop}\label{th:simplecoho}
	Let $\g$ be a (finite dimensional) semi-simple Lie algebra and $\gamma:\g \to \g$ an automorphism. Then $H^1(\gamma) = \Der_\gamma(\g)$, $H^2(\gamma)=  \factor{\gamma (\Der(\g))}{\del_\gamma (\Der(\g))}$ and $H^k(\gamma)= 0$ for $k \geq 2$.  Here $\Der_\gamma (\g)=  \{\phi \in \Der(\g) | \gamma \phi =\phi \gamma \}$ and $\alpha(\Der(\g)) = \{ \alpha \circ \phi| \phi \in \Der(\g)\}$.
\end{prop}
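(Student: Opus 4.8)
The plan is to recognise the complex $\eC^\bullet(\gamma)$ as a mapping cone and to feed the strong form of Whitehead's theorem into the resulting long exact sequence. Write $\LC^\bullet(\g,\g)$ for the ordinary Chevalley--Eilenberg complex with adjoint coefficients (truncated by $\LC^0=0$ as above), and recall $\del_\gamma\phi=\gamma\phi-\phi\gamma^{\otimes k}$. A short computation shows that $\del_\gamma$ is a chain map from $(\LC^\bullet(\g,\g),\del_{CE})$ to $(\LC^\bullet(\g,\g_\gamma),\del_{CE}^\gamma)$, where $\g_\gamma$ denotes $\g$ equipped with the $\gamma$-twisted adjoint action $x\cdot m=[\gamma(x),m]$ and $\del_{CE}^\gamma$ is its Chevalley--Eilenberg differential; with this reading the relation $\delta^2=0$ (equivalently \Cref{th:morph}, which for invertible $\gamma$ identifies $\eC^\bullet(\gamma)$ with $\aLC^\bullet(\g_\gamma)$) says precisely that $\eC^\bullet(\gamma)$ is the mapping cone of $\del_\gamma$.

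First I would compute the two Chevalley--Eilenberg cohomologies for semisimple $\g$. By Whitehead's theorem the adjoint module carries no invariants (the centre is zero), so $\LH^\bullet(\g,\g)$ vanishes in every degree; taking the truncation $\LC^0=0$ into account, this gives $\LH^0=0$, $\LH^1(\g,\g)=\Der(\g)$ and $\LH^{\geq 2}(\g,\g)=0$. Since $\gamma$ is an automorphism, $\gamma\colon\g\to\g_\gamma$ is an isomorphism of $\g$-modules, so the same computation yields $\LH^1(\g,\g_\gamma)=\gamma(\Der(\g))$ and $\LH^{\geq 2}(\g,\g_\gamma)=0$.

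I would then read off the cohomology from the mapping-cone long exact sequence
\[
\cdots \to \LH^{n-1}(\g,\g_\gamma) \to H^n(\gamma) \to \LH^n(\g,\g) \xrightarrow{(\del_\gamma)_*} \LH^n(\g,\g_\gamma) \to \cdots .
\]
For $n\geq 3$ both flanking terms vanish, whence $H^n(\gamma)=0$ (so the stated range should read $k\geq 3$). In low degrees the sequence collapses to
\[
0 \to H^1(\gamma) \to \Der(\g) \xrightarrow{(\del_\gamma)_*} \gamma(\Der(\g)) \to H^2(\gamma) \to 0,
\]
with middle map $\phi\mapsto\gamma\phi-\phi\gamma$. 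Hence $H^1(\gamma)=\ker(\del_\gamma|_{\Der(\g)})=\{\phi\in\Der(\g):\gamma\phi=\phi\gamma\}=\Der_\gamma(\g)$ and $H^2(\gamma)=\operatorname{coker}(\del_\gamma|_{\Der(\g)})=\gamma(\Der(\g))/\del_\gamma(\Der(\g))$. The degree-one statement can alternatively be seen directly at the cochain level: since $\eC^0(\gamma)=0$, a $1$-cocycle is a pair $(\phi,0)$ with $\del_{CE}\phi=0$ and $\del_\gamma\phi=0$, i.e.\ a $\gamma$-commuting derivation.

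The step I expect to require the most care is identifying the differential on the second summand together with the accompanying bookkeeping: one must check that $\del_\gamma$ really is a chain map into the $\gamma$-twisted complex (it is \emph{not} a chain map for the untwisted differential), track how the truncation $\LC^0=0$ promotes $\LH^1(\g,\g)$ from $\mathrm{Out}(\g)=0$ to all of $\Der(\g)$, and verify that $\del_\gamma(\Der(\g))\subseteq\gamma(\Der(\g))$ so that the quotient describing $H^2(\gamma)$ is meaningful. Each of these is either a short computation or a direct consequence of $\gamma$ being an automorphism together with the full vanishing of $\LH^\bullet(\g,\g)$.
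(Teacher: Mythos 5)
Your proof is correct and is essentially the paper's argument in different packaging: the paper runs the spectral sequence of the vertical filtration of the two-column bicomplex $\eC^\bullet(\gamma)$, whose second page computes exactly the kernel and cokernel of $(\del_\gamma)_*$ between the Whitehead-reduced columns, which is the same computation as your mapping-cone long exact sequence and uses the same inputs (vanishing of $\LH^{\geq 2}$ for the adjoint and $\gamma$-twisted adjoint modules, the truncation $\LC^0 = 0$ promoting degree one to $\Der(\g)$ and $\gamma(\Der(\g))$, and invertibility of $\gamma$ to identify $\gamma$-derivations with $\gamma \circ \Der(\g)$). Your correction that the vanishing range must read $k \geq 3$ rather than $k \geq 2$ is also right, since the statement as printed contradicts the explicit description of $H^2(\gamma)$ (for instance $\gamma = \id$ gives $\del_\gamma = 0$ on $\Der(\g)$ and hence $H^2(\gamma) = \Der(\g) \neq 0$).
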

\begin{proof}
	By the usual Whitehead Lemma we get that $\LH(\g,M) = \{0\}$ for a simple $\g$-module $M$. So in particular $\LH(\g,\g) =0$ and $\LH(\g,\tilde \g)=0$, where $\tilde \g$ denotes $\g$ with  the action given by $x \cdot y = [\gamma(x) ,y] $ for $x \in \g$ and 
	$y \in \tilde\g$.  So if we consider the spectral sequence associated to the vertical filtration of the bicomplex $C(\gamma)$, for the first page of it, which is the cohomology with respect to $\del_{CE}$,  we get 
	\begin{equation}
	 H^1(C^\bullet_\nu(\g,\g)) = \Der(\g) = \operatorname{InnDer}(\g),   H^1(C^\bullet_\gamma(\g)) = \Der(\tilde \g) = \operatorname{InnDer}(\tilde\g).
	\end{equation}
	We do not get zero here, since we started the complex for $H(\gamma)$ by $\Hom(\g,\g)$ and dropped $\Hom(\K,\g)$. So the second page of the spectral sequence gives the claimed result. We have $\operatorname{\alpha-Der}(\g) = \alpha (\Der(\g))$, so every $\gamma$-derivation is of the form $\gamma \phi$ for a derivation $\phi$. 
\end{proof}
 
If $\alpha$ is diagonalizable, we further have $H^2(\gamma)=  \factor{\gamma \Der(\g)}{\del_\gamma \Der(\g)} \cong \gamma \Der_\gamma(\g)$. 

\begin{theorem}[Whitehead Theorem for Hom-Lie algebras]
	Let $(\g,\nu,\alpha)$ be a finite dimensional simple Hom-Lie algebra, then $\aLH^1(\g,\g) = \Der_\alpha(\g)$, $\aLH^2(\g,\g)=  \factor{\alpha(\Der(\g))}{\del_\gamma(\Der(\g))}$ and $\aLH^k(\g,\g)= 0$ for $k \geq 2$. 
\end{theorem}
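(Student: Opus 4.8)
The plan is to deduce the statement from the two structural results already available, so that essentially no new computation is needed beyond careful bookkeeping. First I would invoke \citep{chen}: a finite dimensional simple Hom-Lie algebra $(\g,\nu,\alpha)$ is the Yau twist $\h_\gamma$ of a finite dimensional semi-simple Lie algebra $\h$ by an automorphism $\gamma$. Since the Yau twist of an ordinary Lie algebra (structure map $\id$) produces the structure map $\gamma\circ\id=\gamma$, we have $\alpha=\gamma$, and in particular $\alpha$ is invertible.

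Because $\gamma=\alpha$ is invertible, \Cref{th:morph} applies and the chain map $\Phi\colon C^\bullet(\h,\gamma)\to\aLC^\bullet(\g)$ is an isomorphism of complexes, hence induces an isomorphism $\aLH^\bullet(\g,\g)\cong H^\bullet(\gamma)$ in every degree. I would then transport the computation of \Cref{th:simplecoho} through this isomorphism: it gives $H^1(\gamma)=\Der_\gamma(\h)$, $H^2(\gamma)=\factor{\gamma(\Der(\h))}{\del_\gamma(\Der(\h))}$, and $H^k(\gamma)=0$ for $k\geq 3$. The vanishing $\aLH^k(\g,\g)=0$ for $k\geq 3$ is then immediate, and the shape of the degree-two answer is forced; the only thing left is to rewrite the degree-one and degree-two answers in terms of $\g$ rather than $\h$.

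For degree one I would argue directly on the $\alpha$-type complex of $\g$, which is cleaner than transporting. A pair $(\phi,0)\in\aLC^1(\g)$ is a cocycle iff $\delmm\phi=0$ and $\delma\phi=0$. Unwinding $\delmm$ with $\alpha^0=\id$ gives $\delmm\phi(x_1,x_2)=[x_1,\phi(x_2)]+[\phi(x_1),x_2]-\phi([x_1,x_2])$, so $\delmm\phi=0$ is exactly the Hom-Lie derivation identity for $\g$, while $\delma\phi=\alpha\phi-\phi\alpha=0$ says $\phi$ commutes with $\alpha$. Since the complex starts in degree one there are no coboundaries, so $\aLH^1(\g,\g)=\Der_\alpha(\g)$. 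Under $\Phi$, which is the identity in degree one as $\gamma^{n-1}=\id$ there, this matches $\Der_\gamma(\h)$; invertibility of $\gamma$ shows $\Der_\gamma(\h)=\Der_\alpha(\g)$ as subspaces of $\Hom(\g,\g)$, because when $\phi\gamma=\gamma\phi$ the derivation conditions for $\h$ and for $\h_\gamma$ are interchanged by applying the invertible $\gamma$.

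For degree two I would transport $\factor{\gamma(\Der(\h))}{\del_\gamma(\Der(\h))}$ through $\Phi$ and identify it with $\factor{\alpha(\Der(\g))}{\del_\gamma(\Der(\g))}$, again using $\alpha=\gamma$ together with the fact that, for semi-simple $\h$, every derivation is inner and conjugation by the automorphism $\gamma$ preserves $\Der(\h)$, so that $\del_\gamma$ indeed lands in $\gamma(\Der(\h))$ and the quotient is well defined. The main obstacle is precisely this bookkeeping: since the bracket of $\g$ differs from that of $\h$ by the twist $\gamma$, the derivation spaces and the map $\del_\gamma$ must be matched correctly across $\Phi$. Once $\alpha=\gamma$ and invertibility are used the identifications are forced, and the three assertions follow.
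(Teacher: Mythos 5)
Your proposal is correct and follows essentially the same route as the paper: invoke \citep{chen} to realize the simple Hom-Lie algebra as the Yau twist of a semi-simple Lie algebra by the invertible $\alpha$, use the isomorphism $\Phi$ of \cref{th:morph}, and read off the answer from \cref{th:simplecoho}. Your extra bookkeeping (the direct degree-one check that cocycles are exactly $\operatorname{\alpha-Der}(\g)$, the identification $\Der_\gamma(\h)=\Der_\alpha(\g)$ via conjugating the derivation identity by the invertible twist, and the silent correction of the vanishing range to $k\geq 3$) only makes explicit what the paper's terse proof leaves to the reader.
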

\begin{proof}
	For a simple Hom-Lie algebra we have that $\alpha$ is invertible and  in \citep{chen} it is proven that $\g_{\alpha^{-1}}$ is a semi-simple Lie algebra. So it is enough to compute $H(\g_{\alpha^{-1}},\alpha)$.  This is given by the previous proposition.
\end{proof}

\section{\texorpdfstring{$L_\infty$}{L infinity}-structure}\label{sc:linfty}

As in the Hom-associative case, we conjecture that there is an $L_\infty$-structure on the complex $\aLC(\g,\g)$, where $\g$ is a vector space considered as Hom-Lie algebra with the zero Hom-Lie  structure, such that Maurer-Cartan elements are Hom-Lie algebra structures on $\g$ and the cohomology one can construct from this is the one we defined in the previous section. In fact we suspect that the $L_\infty$-structure describing Hom-Lie algebras can be derived from the one describing Hom-associative algebras by total antisymmetrization. 

However using a graph complex, which corresponds to a free operad, we are able to calculate the low degrees.

We consider the free symmetric operad,  spanned by  totally skewsymmetric   operations $\verm2, \verm3, \verm4, \vera1, \vera2, \vera3$.  We consider them to be graded with $\deg(\verak) = k-1$ and $\deg(\vermk) = k-2$. 

On this free operad we define a  differential on generators by $\del \verm2 = \del \vera1 =0$, 
\begin{align*}
	\del \verm3 & = 
	\begin{tikzpicture}[scale=0.3,point/.style={draw,shape=circle,fill=blue,minimum size=2,inner sep=0}, left/.style={draw,regular polygon, regular polygon sides=3, rotate=90,minimum size=5,inner sep=0},
	right/.style={draw,regular polygon, regular polygon sides=3, rotate=-90,minimum size=5,inner sep=0},
	circ/.style={draw,shape=circle,minimum size=5,inner sep=0}]
	\node [point] (a0) at (2.25,2) {};
	\node [point] (a1) at (1.5,1) {};
	\draw (a0) -- +(0,0.7) ;
	\draw (1,0) -- (a1);
	\draw (2,0) -- (a1);
	\draw  (a1) -- (a0); 
	\node [circ] (a2) at (3,1) {};
	\draw (3,0) -- (a2);
	\draw  (a2) -- (a0); 
	\end{tikzpicture}
	,\; \del \vera2 =
	\begin{tikzpicture}[scale=0.3,point/.style={draw,shape=circle,fill=blue,minimum size=2,inner sep=0}, left/.style={draw,regular polygon, regular polygon sides=3, rotate=90,minimum size=5,inner sep=0},
	right/.style={draw,regular polygon, regular polygon sides=3, rotate=-90,minimum size=5,inner sep=0},
	circ/.style={draw,shape=circle,minimum size=5,inner sep=0}]
	\node [circ] (a0) at (1.5,2) {};
	\node [point] (a1) at (1.5,1) {};
	\draw (a0) -- +(0,0.7) ; 	\draw (1,0) -- (a1);
	\draw (2,0) -- (a1);
	\draw  (a1) -- (a0); 
	\end{tikzpicture}
	-\begin{tikzpicture}[scale=0.3,point/.style={draw,shape=circle,fill=blue,minimum size=2,inner sep=0}, left/.style={draw,regular polygon, regular polygon sides=3, rotate=90,minimum size=5,inner sep=0},
	right/.style={draw,regular polygon, regular polygon sides=3, rotate=-90,minimum size=5,inner sep=0},
	circ/.style={draw,shape=circle,minimum size=5,inner sep=0}]
	\node [point] (a0) at (1.5,2) {};
	\node [circ] (a1) at (1,1) {};
	\draw (a0) -- +(0,0.7) ; 	\draw (1,0) -- (a1);
	\draw  (a1) -- (a0); 
	\node [circ] (a2) at (2,1) {};
	\draw (2,0) -- (a2);
	\draw  (a2) -- (a0); 
	\end{tikzpicture}, \\
	\del \verm4 & = 
	\begin{tikzpicture}[scale=0.3,point/.style={draw,shape=circle,fill=blue,minimum size=2,inner sep=0}, left/.style={draw,regular polygon, regular polygon sides=3, rotate=90,minimum size=5,inner sep=0},
	right/.style={draw,regular polygon, regular polygon sides=3, rotate=-90,minimum size=5,inner sep=0},
	circ/.style={draw,shape=circle,minimum size=5,inner sep=0}]
	\node [point] (a0) at (2.83333,2) {};
	\node [point] (a1) at (1.5,1) {};
	\draw (a0) -- +(0,0.7) ; 	\draw (1,0) -- (a1);
	\draw (2,0) -- (a1);
	\draw  (a1) -- (a0); 
	\node [circ] (a2) at (3,1) {};
	\draw (3,0) -- (a2);
	\draw  (a2) -- (a0); 
	\node [circ] (a3) at (4,1) {};
	\draw (4,0) -- (a3);
	\draw  (a3) -- (a0); 
	\end{tikzpicture}
	- \begin{tikzpicture}[scale=0.3,point/.style={draw,shape=circle,fill=blue,minimum size=2,inner sep=0}, left/.style={draw,regular polygon, regular polygon sides=3, rotate=90,minimum size=5,inner sep=0},
	right/.style={draw,regular polygon, regular polygon sides=3, rotate=-90,minimum size=5,inner sep=0},
	circ/.style={draw,shape=circle,minimum size=5,inner sep=0}]
	\node [point] (a0) at (3,3) {};
	\node [point] (a1) at (2,1) {};
	\draw (a0) -- +(0,0.7) ; 	\draw (1,0) -- (a1);
	\draw (2,0) -- (a1);
	\draw (3,0) -- (a1);
	\draw  (a1) -- (a0); 
	\node [circ] (a2) at (4,2) {};
	\node [circ] (a3) at (4,1) {};
	\draw (4,0) -- (a3);
	\draw  (a3) -- (a2); 
	\draw  (a2) -- (a0); 
	\end{tikzpicture}
	+ \begin{tikzpicture}[scale=0.3,point/.style={draw,shape=circle,fill=blue,minimum size=2,inner sep=0}, left/.style={draw,regular polygon, regular polygon sides=3, rotate=90,minimum size=5,inner sep=0},
	right/.style={draw,regular polygon, regular polygon sides=3, rotate=-90,minimum size=5,inner sep=0},
	circ/.style={draw,shape=circle,minimum size=5,inner sep=0}]
	\node [point] (a0) at (2.5,3) {};
	\node [circ] (a1) at (1.5,1) {};
	\draw (a0) -- +(0,0.7) ; 	\draw (1,0) -- (a1);
	\draw (2,0) -- (a1);
	\draw  (a1) -- (a0); 
	\node [point] (a2) at (3.5,2) {};
	\node [circ] (a3) at (3,1) {};
	\draw (3,0) -- (a3);
	\draw  (a3) -- (a2); 
	\node [circ] (a4) at (4,1) {};
	\draw (4,0) -- (a4);
	\draw  (a4) -- (a2); 
	\draw  (a2) -- (a0); 
	\end{tikzpicture}, \\
	\del \vera3 & = 
	\begin{tikzpicture}[scale=0.3,point/.style={draw,shape=circle,fill=blue,minimum size=2,inner sep=0}, left/.style={draw,regular polygon, regular polygon sides=3, rotate=90,minimum size=5,inner sep=0},
	right/.style={draw,regular polygon, regular polygon sides=3, rotate=-90,minimum size=5,inner sep=0},
	circ/.style={draw,shape=circle,minimum size=5,inner sep=0}]
	\node [circ] (a0) at (2,2) {};
	\node [point] (a1) at (2,1) {};
	\draw (a0) -- +(0,0.7) ; 	\draw (1,0) -- (a1);
	\draw (2,0) -- (a1);
	\draw (3,0) -- (a1);
	\draw  (a1) -- (a0); 
	\end{tikzpicture}
	- \begin{tikzpicture}[scale=0.3,point/.style={draw,shape=circle,fill=blue,minimum size=2,inner sep=0}, left/.style={draw,regular polygon, regular polygon sides=3, rotate=90,minimum size=5,inner sep=0},
	right/.style={draw,regular polygon, regular polygon sides=3, rotate=-90,minimum size=5,inner sep=0},
	circ/.style={draw,shape=circle,minimum size=5,inner sep=0}]
	\node [point] (a0) at (2,2) {};
	\node [circ] (a1) at (1,1) {};
	\draw (a0) -- +(0,0.7) ; 	\draw (1,0) -- (a1);
	\draw  (a1) -- (a0); 
	\node [circ] (a2) at (2,1) {};
	\draw (2,0) -- (a2);
	\draw  (a2) -- (a0); 
	\node [circ] (a3) at (3,1) {};
	\draw (3,0) -- (a3);
	\draw  (a3) -- (a0); 
	\end{tikzpicture}
	- \begin{tikzpicture}[scale=0.3,point/.style={draw,shape=circle,fill=blue,minimum size=2,inner sep=0}, left/.style={draw,regular polygon, regular polygon sides=3, rotate=90,minimum size=5,inner sep=0},
	right/.style={draw,regular polygon, regular polygon sides=3, rotate=-90,minimum size=5,inner sep=0},
	circ/.style={draw,shape=circle,minimum size=5,inner sep=0}]
	\node [circ] (a0) at (2.25,2) {};
	\node [point] (a1) at (1.5,1) {};
	\draw (a0) -- +(0,0.7) ; 	\draw (1,0) -- (a1);
	\draw (2,0) -- (a1);
	\draw  (a1) -- (a0); 
	\node [circ] (a2) at (3,1) {};
	\draw (3,0) -- (a2);
	\draw  (a2) -- (a0); 
	\end{tikzpicture}
	- \begin{tikzpicture}[scale=0.3,point/.style={draw,shape=circle,fill=blue,minimum size=2,inner sep=0}, left/.style={draw,regular polygon, regular polygon sides=3, rotate=90,minimum size=5,inner sep=0},
	right/.style={draw,regular polygon, regular polygon sides=3, rotate=-90,minimum size=5,inner sep=0},
	circ/.style={draw,shape=circle,minimum size=5,inner sep=0}]
	\node [point] (a0) at (2.25,3) {};
	\node [circ] (a1) at (1.5,1) {};
	\draw (a0) -- +(0,0.7) ; 	\draw (1,0) -- (a1);
	\draw (2,0) -- (a1);
	\draw  (a1) -- (a0); 
	\node [circ] (a2) at (3,2) {};
	\node [circ] (a3) at (3,1) {};
	\draw (3,0) -- (a3);
	\draw  (a3) -- (a2); 
	\draw  (a2) -- (a0); 
	\end{tikzpicture}.
\end{align*}
Here one has to take the total antisymmetrization of each term, but only take terms, which are not equal using the antisymmetry of the operations.

Using the same technique as in \citep{homhcoho}, based on \citep{MR2203621,fregier1}, we obtain an $L_\infty$-structure on $\aLC(\g,\g)$ by replacing the nodes with maps  in the complex.

We give the corresponding brackets for $\phi_i \in \aC_\nu^i(\g,\g), \psi_i \in \aC_\alpha^i(\g,\g), \alpha_i \in \aC_\alpha^2(\g,\g)$ and $\nu_i \in \aC_\nu^2(\g,\g)$:
\begin{align*}
	\deg 1: \\
	[\nu_1 ,\nu_2, \alpha]_\nu                 & = \nu_1 (\nu_2 \wedge \alpha)                                                                                
	+ \nu_2 (\nu_1 \wedge \alpha), \\
	[\nu,\alpha]_\alpha                        & = \alpha \nu  ,                                                                                               \\
	[\nu, \alpha_1,\alpha_2]_\alpha            & =  - \nu( \alpha_1 \wedge \alpha_2 )    ,                                                                     \\
	\deg 2: \\
	[\phi_3, \nu,\alpha_1,\alpha_2]_\nu        & =  \phi_3 (\nu \wedge \alpha_1 \wedge \alpha_2)                                                              
	- \nu(\phi_3 \wedge \alpha_1 \alpha_2 )   - \nu(\phi_3 \wedge \alpha_2 \alpha_1 ) ,\\
	[\psi_3,\nu_1,\nu_2,\alpha_1,\alpha_2]_\nu     & =  \sum_{\sigma \in S_2} \nu_{\sigma(1)}(\psi_3 \wedge \nu_{\sigma(2)}(\alpha_{1} \wedge \alpha_{2})) ,\\
	[\phi_3,\alpha]_\alpha                     & = \alpha \phi_3      ,                                                                                        \\
	[\phi_3,\alpha_1,\alpha_2,\alpha_3]_\alpha & =  - \phi_3 (\alpha_{1} \wedge \alpha_{2} \wedge \alpha_{3} )                                       ,         \\
	[\psi_3,\nu,\alpha]_\alpha                 & = \psi_3 ( \alpha \wedge \nu) ,                                                                               \\
	[\psi_3,\nu,\alpha_1,\alpha_2]_\alpha      & =  - \nu(\psi_3 \wedge \alpha_1 \alpha_2) - \nu(\psi_3 \wedge \alpha_2 \alpha_1)  .                           
\end{align*}

\begin{theorem}
	The Maurer-Cartan elements of this $L_\infty$-structure are Hom-Lie algebras, and the differential on $\aLC^2$ and $\aLC^3$ comes from it.
\end{theorem}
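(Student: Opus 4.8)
The plan is to use the standard correspondence in $L_\infty$-theory between the defining axioms of an algebraic structure and the Maurer--Cartan (MC) equation, together with the twisting procedure. Write $m=(\nu,\alpha)\in\aLC^2(\g,\g)=\aC^2_\nu(\g,\g)\oplus\aC^2_\alpha(\g,\g)$, so that $\nu\in\Hom(\Lambda^2\g,\g)$ is an (automatically skew) bracket and $\alpha\in\Hom(\g,\g)$ a structure map. Since the underlying Hom-Lie structure on $\g$ is taken to be zero, the unary bracket vanishes, $l_1=0$, and the MC equation reduces to $\sum_{n\geq2}\frac{1}{n!}l_n(m,\dots,m)=0$. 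I would first expand this equation, splitting it according to its $\aC^3_\nu$- and $\aC^3_\alpha$-components.

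For the $\aC^3_\nu$-component, only the bracket $[\nu_1,\nu_2,\alpha]_\nu$ contributes. Selecting from $l_3(m,m,m)$ the terms carrying two factors $\nu$ and one factor $\alpha$ (multiplicity $\binom{3}{2}=3$) and using $[\nu,\nu,\alpha]_\nu=2\,\nu(\nu\wedge\alpha)$, the prefactor $\tfrac{1}{3!}$ collapses the coefficient to $1$, so the equation becomes $\nu(\nu\wedge\alpha)=0$; evaluated on $x,y,z$ this is exactly the cyclic sum $[[x,y],\alpha(z)]+[[y,z],\alpha(x)]+[[z,x],\alpha(y)]=0$, the Hom-Jacobi identity. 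For the $\aC^3_\alpha$-component the brackets $[\nu,\alpha]_\alpha=\alpha\nu$ and $[\nu,\alpha_1,\alpha_2]_\alpha=-\nu(\alpha_1\wedge\alpha_2)$ contribute; collecting the $\alpha$-parts of $\tfrac{1}{2!}l_2(m,m)$ and $\tfrac{1}{3!}l_3(m,m,m)$ yields $\alpha\nu-\tfrac12\nu(\alpha\wedge\alpha)=0$, which on $x,y$ reads $\alpha([x,y])-[\alpha(x),\alpha(y)]=0$, the multiplicativity condition. As skew-symmetry of $\nu$ is built into $\aC^2_\nu$, the MC elements are precisely the Hom-Lie algebra structures on $\g$.

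For the second assertion I would invoke the twisting of the $L_\infty$-structure at the MC element $m$: the twisted unary bracket $l_1^{m}(x)=\sum_{n\geq1}\frac{1}{n!}l_{n+1}(m,\dots,m,x)$ is a differential, and the claim is that it agrees with $\del$ on $\aLC^2$ and $\aLC^3$. On $\aLC^2$ the relevant brackets are exactly the degree-$1$ list, and the computation is the linearization of the MC computation just performed; matching the $\nu$- and $\alpha$-components reproduces $\del(\phi,\psi)=(\delmm\phi-\delam\psi,\delma\phi-\delaa\psi)$. On $\aLC^3$ one feeds a degree-$3$ element $(\phi_3,\psi_3)$ into the degree-$2$ brackets: the terms $[\phi_3,\alpha]_\alpha=\alpha\phi_3$ and $[\phi_3,\alpha_1,\alpha_2,\alpha_3]_\alpha=-\phi_3(\alpha\wedge\alpha\wedge\alpha)$ assemble into $\delma\phi_3=\alpha\phi_3-\phi_3\alpha^{\otimes3}$, while $[\psi_3,\nu,\alpha]_\alpha$ and $[\psi_3,\nu,\alpha_1,\alpha_2]_\alpha$ reconstruct $\delaa\psi_3$ (the repeated $\alpha$ producing the power $\alpha^{2}$ appearing in the explicit formula), and similarly the $\nu$-landing brackets $[\phi_3,\nu,\alpha_1,\alpha_2]_\nu$ and $[\psi_3,\nu_1,\nu_2,\alpha_1,\alpha_2]_\nu$ build $\delmm\phi_3-\delam\psi_3$. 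The restriction to degrees $2$ and $3$ is forced by the fact that the operadic differential, hence the brackets, was only computed on generators up to arity four.

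The main obstacle is purely combinatorial bookkeeping in the second part: one must verify that, for each target term, the prefactors $\tfrac1{n!}$ together with the multiplicity of choosing which copies of $m=\nu+\alpha$ supply $\nu$ and which supply $\alpha$ combine to exactly the coefficient in $\del$, and that the operadic compositions such as $\nu(\psi_3\wedge\alpha\alpha)$ unfold into the index sums with the signs $(-1)^{i+j-1}$ and the correct powers $\alpha^{n-1}$ of \cref{sc:coho}. The mechanism making the powers match is that each additional copy of $\alpha$ drawn from $m$ stacks another $\alpha$-node, so that in degree $n$ exactly $n-1$ copies accumulate; this is precisely why the differential carries $\alpha^{n-1}$ and explains why only the low-arity brackets are needed in degrees $2$ and $3$. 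Once the dictionary between the graphical compositions and the skew-symmetrized index formulas of \cref{rm:symdiff} is fixed, the matching is a finite check.
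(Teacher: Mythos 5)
Your proposal is correct and follows essentially the same route as the paper: the paper likewise reads off from the degree-0/degree-1 bracket list that a Maurer--Cartan element is exactly a Hom-Lie structure, and obtains the differential on $\aLC^2$ and $\aLC^3$ by inserting copies of $\nu$ and $\alpha$ into the brackets (the twisted differential $\del\phi=[\alpha,\dots,\nu,\dots,\phi]$) and comparing with the explicit formulas. The only difference is that you spell out the coefficient bookkeeping ($\tfrac{1}{n!}$ prefactors, multiplicities, the accumulation of $\alpha$-powers) which the paper's proof leaves as ``inspection of the brackets.''
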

\begin{proof}
	It is clear, by regarding the  brackets in degree 0, that a Maurer-Cartan element is a Hom-Lie algebra.  The differentials of $(\phi,\psi) \in \aLC(\g,\g)$ can be computed by $\del\phi = [\alpha,\dots,\nu,\dots,\phi]$ and similar for $\psi$. Here $\alpha,\ldots$ stands for the insertion of zero or more $\alpha$ and the same for $\nu$.  Inspection of the brackets shows that this agrees with the definition given before.
\end{proof}

\section{Deformation theory including the structure map}\label{sc:deformation}

In this section we briefly discuss how the $\alpha$-type Chevalley-Eilenberg cohomology can be used to study one-parameter formal deformations of a	 Hom-Lie algebra $(\g,\nu,\alpha)$, where the bracket $\nu$ and the structure map $\alpha$ are deformed. 
 For this we denote by $\K\ph$ the ring of formal power series over $\K$ in a formal parameter $t$. For a vector space $\g$ we denote by $\g\ph$ the space of formal power series of the form $\sum_{i=0}^\infty g_i t^i$ for $g_i \in \g$. 
This is  obviously a $\K\ph$-module. We recall that a $\K\ph$-linear map $\phi$ between $V\ph$ and $W\ph$ for two vector spaces $V,W$ can be given by  $\sum_{i=0}^\infty \phi_i t^i$, where $\phi_i: V \to W$ are $\K$-linear maps extended to $\K\ph$-linear  maps $V\ph \to W\ph$ in the obvious way.    

\begin{defn}
	 A (one-parameter formal) deformation of a Hom-Lie algebra $(\g,\nu,\alpha)$ over $\K$ is a Hom-Lie algebra $(\g\ph, \nu_t, \alpha_t)$ over $\K\ph$, such that  $\alpha_t = \alpha + \sum_{i=1}^\infty \alpha_i t^i$ and 
	$[ x,y]_t = [x,y] +  \sum_{i=1}^\infty t^i [x,y]_i $. Here we denote $\nu_t( x \otimes y) = [x,y]_t$. 
\end{defn}

\begin{defn}
	Two deformations $(\nu_t,\alpha_t)$ and $(\nu'_t,  \alpha'_t)$ are said equivalent if there exists a formal morphism $S= \id + \sum_{i=1}^\infty S_i t^i$, where $S_i: \g \to \g$ are linear maps, such that $S( [x,y]'_t) = [S(x),S(y)]_t$ and $S  \alpha'_t = \alpha_t S$. 
\end{defn}

It is clear that this  equivalence is an equivalence relation on the set of deformations of a Hom-Lie algebra.

Let  $\g$ be a Lie algebra viewed as Hom-Lie algebra and $(\g\ph, \nu_t,\alpha_t)$ be a deformation. Then $\alpha_1$ is a derivation. 
This follows from the multiplicativity, using the expansions $\alpha_t([x,y]_t) = [x,y] + t( \alpha_1 [x,y] + [x,y]_1 ) +\O(t^2) $ and $[\alpha_t(x),\alpha_t(y)]_t= [x,y] + t ([\alpha_1 x, y] + [ x, \alpha_1  y] + [x,y]_1) +\O(t^2)$.

The condition that $(\g\ph,\nu_t,\alpha_t)$ is a deformation of $(\g,\nu,\alpha)$ is equivalent to the following two conditions:

First it has to satisfy the Hom-Jacobi identity, which gives 
\begin{equation}
	\sum_{i,j,k \geq 0} \cycl_{x,y,z} \nu_i(\alpha_j(x),\nu_k(y,z)) t^{i+j+k} =0.
\end{equation}
Here and in the following equations $ \cycl_{x,y,z}$ denotes the cyclic sum over the elements $x,y,z \in \g$.
At order $n$ in $t$ this gives 
\begin{equation}
\sum_{\substack{ i,j,k\geq 0 \\ i+j+k=n }} \cycl_{x,y,z} \nu_i(\alpha_j(x),\nu_k(y,z))  =0.
\end{equation}
This is called the $n$-th deformation equation with respect to the Hom-Jacobi identity.
It can be rearranged to 
\begin{equation} 
	(\delmm \nu_n + \delam \alpha_n)(x,y,z)= \sum_{\substack{ 0 \leq i,j,k \leq n-1 \\ i+j+k=n }} \cycl_{x,y,z} \nu_i(\alpha_j(x),\nu_k(y,z))  =0 ,
\end{equation}
where $\delmm \nu_n(x,y,z)= -  \cycl_{x,y,z} \nu_n([x,y],\alpha(z)) +  \cycl_{x,y,z} [\alpha(x),\nu_n(y,z)]$ and $\delam \alpha_n (x,y,z) =  \cycl_{x,y,z} [\alpha_n(x),[y,z]]$ are the parts of the differential defined in \cref{sc:coho}. We denote the right hand side by $R_n^1$.

Second,  it has to satisfy the multiplicativity, this is 
\begin{equation}
 \sum_{i,j\geq 0} \alpha_i(\nu_j(x,y)) t^{i+j}-  \sum_{i,j,k\geq 0} \nu_i(\alpha_j(x),\alpha_k(y)) t^{i+j+k} =0.
\end{equation}
At order $n$  this gives 
\begin{equation}
	\sum_{i=0}^{n} \alpha_i(\nu_{n-i}(x,y)) -  \sum_{\substack{0 \leq i,j,k \leq n \\ i+j+k =n } } \nu_i(\alpha_j(x),\alpha_k(y))  =0.
\end{equation}
We call this the $n$-th deformation equation with respect to the multiplicativity. Again this can be rewritten as 
\begin{equation}
	(\delaa \alpha_n + \delma \nu_n)(x,y) = -\sum_{i=0}^{n-1} \alpha_i(\nu_{n-i}(x,y)) +  \sum_{\substack{0 \leq i,j,k \leq n-1 \\ i+j+k =n } } \nu_i(\alpha_j(x),\alpha_k(y)), 
\end{equation}
where $\delma \nu_n = \alpha \nu_n - \nu_n (\alpha \otimes \alpha)$ and  $\delaa \alpha_n(x,y) = [\alpha(x), \alpha_n(y)] -[\alpha(y), \alpha_n(x)]    - \alpha_n([x,y])$.  We denote the right hand side by $R_n^2$. 

Since the deformation is governed by an $L_\infty$-algebra, we have the usual statement relating deformations and cohomology. We will omit the proof here since these follows from the deformation equations given above and  are almost identical to the Hom-associative case given in \citep{homhcoho}.

\begin{theorem}
	Let $(\g,\nu,\alpha)$ be a Hom-Lie algebra and $(\g\ph,\nu_t,\alpha_t)$ be a deformation of $\g$. Then we have 
	\begin{enumerate}
		\item The first order term is a 2-cocycle, i.e.\ we have $\del(\nu_1,\alpha_1) = 0$, whose cohomology class is invariant under equivalence.
		\item The n-th deformation equations with respect to the Hom-Jacobi identity and multiplicativity,  are equivalent to $\del(\nu_n,\alpha_n) = (R^1_n,R^2_n)$.
		Furthermore,  $(R^1_n,R^2_n)$ is a 3-cocycle, i.e. $\del (R^1_n,R^2_n) = 0$. 
	\end{enumerate}
\end{theorem}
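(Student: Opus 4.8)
The plan is to prove the two assertions by expanding, order by order, the two families of deformation equations derived above and reading off at each order the part linear in the top-order unknowns $(\nu_n,\alpha_n)$. The governing principle is that this linear part reassembles into $\del(\nu_n,\alpha_n)$, while everything else depends only on the lower-order data and constitutes $(R^1_n,R^2_n)$.

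For the first assertion I would specialise both deformation equations to $n=1$. The right-hand sides $R^1_1$ and $R^2_1$ are sums indexed by triples $(i,j,k)$ at least two of whose entries are positive and with $i+j+k=1$; no such triple exists, so $R^1_1=R^2_1=0$. Hence the first-order Hom-Jacobi and multiplicativity equations read $\delmm\nu_1-\delam\alpha_1=0$ and $\delma\nu_1-\delaa\alpha_1=0$, which together are exactly $\del(\nu_1,\alpha_1)=0$; here one must match the sign of $\delam$ in \cref{sc:coho} against the sign in which it appears in the Hom-Jacobi deformation equation, and this is the one bookkeeping point I would check explicitly. For invariance of the class I would expand the two equivalence conditions $S([x,y]'_t)=[S(x),S(y)]_t$ and $S\alpha'_t=\alpha_t S$ with $S=\id+tS_1+\O(t^2)$ to first order. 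This yields $\nu_1-\nu'_1=-\delmm S_1$ and $\alpha_1-\alpha'_1=-\delma S_1$, so that $(\nu_1,\alpha_1)-(\nu'_1,\alpha'_1)=-\del(S_1,0)$, a coboundary; note that $(S_1,0)$ is a genuine element of $\aLC^1(\g,\g)$ precisely because the complex is truncated at degree one. Thus $[(\nu_1,\alpha_1)]\in\aLH^2(\g,\g)$ is well defined.

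For the equivalence in the second assertion I would collect, in each of the two order-$n$ deformation equations, the monomials containing exactly one factor $\nu_n$ or $\alpha_n$. Using the explicit formulas one checks that these monomials assemble, via the four maps $\delmm,\delam,\delma,\delaa$, into the two components of $\del(\nu_n,\alpha_n)$ (again up to the sign conventions fixed in the definition of $\del$), while the monomials involving only $\nu_m,\alpha_m$ with $m<n$ are by definition $(R^1_n,R^2_n)$. Transposing gives $\del(\nu_n,\alpha_n)=(R^1_n,R^2_n)$, and conversely this single identity encodes both deformation equations.

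The cocycle property is the substantive point. If the hypothesis is read as a genuine all-order deformation, then $\del(\nu_n,\alpha_n)=(R^1_n,R^2_n)$ actually holds and $\del(R^1_n,R^2_n)=\del^2(\nu_n,\alpha_n)=0$ by the complex property $\del\circ\del=0$ of \cref{sc:coho}. The deformation-theoretic content, however, is the obstruction version: assuming only that the deformation equations hold in orders $<n$, the pair $(R^1_n,R^2_n)$ depends on the lower-order data alone and must be shown to be a $3$-cocycle without exhibiting any $(\nu_n,\alpha_n)$. Here I would use the $L_\infty$-structure of \cref{sc:linfty}: a deformation is a formal curve of Maurer--Cartan elements $\mu_t=\mu_0+\sum_{i\ge 1}t^i\mu_i$ based at the Maurer--Cartan element $\mu_0=(\nu,\alpha)$, whose twisted differential is $\del$. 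The Maurer--Cartan curvature $F(\mu_t)$ obeys the Bianchi-type identity that it is closed for the differential twisted by $\mu_t$ itself; extracting the coefficient of $t^n$ under the inductive hypothesis $F_m=0$ for $m<n$ kills all twisting corrections and leaves $\del F_n=0$, and since $F_n=\del\mu_n-(R^1_n,R^2_n)$ this is exactly $\del(R^1_n,R^2_n)=0$. \textbf{The main obstacle} is making this last step self-contained: the $L_\infty$-brackets of \cref{sc:linfty} are only computed in low arity, so a fully rigorous argument must either supply the higher brackets or, as \citep{homhcoho} does in the associative case, replace the conceptual Bianchi identity by a direct Gerstenhaber-style computation of $\del(R^1_n,R^2_n)$, repeatedly substituting the lower-order deformation equations and invoking the Hom-Jacobi identity together with the multiplicativity of $\nu$ and $\alpha$. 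The sign and index bookkeeping in that direct computation is where the real work lies.
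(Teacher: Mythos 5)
Your proposal is correct and takes essentially the same route as the paper: the paper in fact omits the proof entirely, stating only that the claims follow from the order-$n$ deformation equations and are almost identical to the Hom-associative case of \citep{homhcoho}, with the deformation governed by the $L_\infty$-structure of \cref{sc:linfty} --- precisely the argument you outline. You actually supply more detail than the paper does (the vanishing of $(R^1_1,R^2_1)$, the equivalence computation with $S_1$ using the truncation $\aC^1_\alpha=\{0\}$, the sign bookkeeping between the section-local and general definitions of $\delam,\delaa$, and the honest caveat that the $L_\infty$-brackets are only computed in low arity, so the obstruction version of the $3$-cocycle claim ultimately rests on the direct computation deferred to \citep{homhcoho}).
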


This means that the construction of a deformation order by order gives equations in $\aLH^3(\g,\g)$, we also get: 
\begin{corollary}
	If $\aLH^3(\g,\g)=0$ any  finite deformation up to order $n$ can be extended to a full deformation. So especially every 2-cocycle can be extended to a deformation.
\end{corollary}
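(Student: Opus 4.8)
The plan is to run the standard obstruction argument of deformation theory, leaning entirely on the two facts already recorded in the preceding theorem: that the $n$-th deformation equations are equivalent to $\del(\nu_n,\alpha_n)=(R^1_n,R^2_n)$, and that the right-hand side $(R^1_n,R^2_n)$ is automatically a $3$-cocycle. I would argue by induction on the order. Suppose we are given a deformation defined up to order $n$, i.e.\ maps $(\nu_i,\alpha_i)$ for $1\le i\le n$ satisfying the Hom-Jacobi and multiplicativity deformation equations in every degree up to $n$. The goal is to produce $(\nu_{n+1},\alpha_{n+1})$ extending this to order $n+1$.

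First I would observe that the obstruction $(R^1_{n+1},R^2_{n+1})$ depends only on the already-constructed lower-order data $\nu_0,\dots,\nu_n$ and $\alpha_0,\dots,\alpha_n$, as is visible from the explicit formulas for $R^1_n$ and $R^2_n$, whose summation indices are bounded by $n$. By part (2) of the theorem this element is a $3$-cocycle, where closedness uses precisely that the lower deformation equations hold, and that is guaranteed by the induction hypothesis. Since $\aLH^3(\g,\g)=0$, every $3$-cocycle is a coboundary, so there exist maps $(\nu_{n+1},\alpha_{n+1})\in\aLC^2(\g,\g)$ with $\del(\nu_{n+1},\alpha_{n+1})=(R^1_{n+1},R^2_{n+1})$. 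By part (2) again this is exactly the $(n+1)$-th deformation equation, so the deformation extends to order $n+1$. Iterating produces a full formal deformation.

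For the final assertion, I would note that a $2$-cocycle $(\nu_1,\alpha_1)$ is the same datum as a deformation up to order $1$: the order-$1$ deformation equation reads $\del(\nu_1,\alpha_1)=(R^1_1,R^2_1)$, and the sums defining $R^1_1$ and $R^2_1$ range over indices $\le 0$ summing to $1$, hence are empty and vanish. The equation therefore collapses to the cocycle condition $\del(\nu_1,\alpha_1)=0$, consistent with part (1) of the theorem. Thus any $2$-cocycle furnishes an order-$1$ deformation, which the induction above extends to a full one.

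I do not expect a genuine obstacle here, since the analytically delicate step---showing that each $(R^1_n,R^2_n)$ is closed---has already been discharged in the theorem. The only points needing care are bookkeeping ones: confirming that $R^1_{n+1}$ and $R^2_{n+1}$ contain no term of order $n+1$ (so the recursion is well-founded), and that the cocycle property at stage $n+1$ uses only the equations already solved. As remarked in the excerpt, these verifications are essentially identical to the Hom-associative case treated in \citep{homhcoho}, so I would cite that parallel rather than reproduce the computation.
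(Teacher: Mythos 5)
Your proposal is correct and follows exactly the route the paper intends: the paper omits the proof, since the corollary is the standard obstruction-theoretic induction based on part (2) of the preceding theorem (the obstruction $(R^1_{n+1},R^2_{n+1})$ depends only on lower-order data and is a $3$-cocycle, hence exact when $\aLH^3(\g,\g)=0$), citing the parallel Hom-associative case in \citep{homhcoho} just as you do. Your observation that the order-$1$ equation degenerates to the cocycle condition (the sums defining $R^1_1$ and $R^2_1$ being empty once the order-$1$ unknowns are moved to the left-hand side) correctly handles the final assertion, so nothing is missing.
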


\begin{prop}
	If two deformations $(\nu_t, \alpha_t)$ and $({\nu'}_t,{\alpha'}_t)$ agree up to order $n-1$, i.e. $\alpha_i = \alpha'_i, \nu_i= \nu'_i$ for $i=1,\dots,n-1$, then we have $\del(\nu_n - {\nu'}_n , \alpha_n - {\alpha'}_n) =0$ and there exists an equivalence up to order $n$ if there exists  linear maps $S_n: \g \to \g$ such that $\del(S_n,0)= (\nu - {\nu'}_n , \alpha_n - {\alpha'}_n)$.
\end{prop}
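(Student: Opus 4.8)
The plan is to treat the two assertions in turn. The cocycle claim follows quickly from the preceding theorem, whereas the equivalence claim comes from expanding the morphism conditions to order $n$ and recognizing the coboundary $\del(S_n,0)$.

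For the first assertion, I would apply the preceding theorem (which identifies the $n$-th deformation equations with $\del(\nu_n,\alpha_n)=(R^1_n,R^2_n)$) to each deformation separately, obtaining $\del(\nu_n,\alpha_n)=(R^1_n,R^2_n)$ and $\del(\nu'_n,\alpha'_n)=({R'}^1_n,{R'}^2_n)$. The key point is that the right-hand sides are built only from the coefficients $\nu_i,\alpha_i$ (resp.\ $\nu'_i,\alpha'_i$) with $i\le n-1$: indeed $R^1_n$ is the cyclic sum $\sum_{0\le i,j,k\le n-1,\,i+j+k=n}\cycl_{x,y,z}\nu_i(\alpha_j(x),\nu_k(y,z))$ and $R^2_n$ is the analogous multiplicativity sum, both running over indices strictly below $n$. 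Since the two deformations agree up to order $n-1$ by hypothesis, these lower-order data coincide, so $R^1_n={R'}^1_n$ and $R^2_n={R'}^2_n$. Subtracting the two identities and using $\K$-linearity of $\del$ gives $\del(\nu_n-\nu'_n,\alpha_n-\alpha'_n)=0$, which is the claimed $2$-cocycle condition.

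For the second assertion, I would make the ansatz $S=\id+S_n t^n$ (the higher coefficients of $S$ play no role for an order-$n$ equivalence and may be chosen later) and expand the two morphism conditions $S\circ\nu'_t=\nu_t\circ(S\otimes S)$ and $S\circ\alpha'_t=\alpha_t\circ S$ degree by degree in $t$. In every degree $k<n$ the correction $S_n t^n$ contributes nothing, so both conditions collapse to $\nu'_k=\nu_k$ and $\alpha'_k=\alpha_k$, which hold by hypothesis; thus the sole nontrivial requirement sits in degree $n$. Collecting the order-$n$ coefficients yields, for the bracket, $[S_n(x),y]+[x,S_n(y)]-S_n([x,y])=\pm\big(\nu_n(x,y)-\nu'_n(x,y)\big)$ and, for the structure map, $\alpha S_n-S_n\alpha=\pm\big(\alpha_n-\alpha'_n\big)$. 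The left-hand sides are precisely $\delmm S_n$ and $\delma S_n$ (since $\delam$ and $\delaa$ vanish on the pair $(S_n,0)$), so these two equations are exactly the components of $\del(S_n,0)=(\nu_n-\nu'_n,\alpha_n-\alpha'_n)$. Hence a solution $S_n$ of this coboundary equation produces an order-$n$ equivalence, and conversely.

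The main obstacle is the sign and index bookkeeping in the degree-$n$ expansion: one must match the coefficients of $S\circ\nu'_t=\nu_t\circ(S\otimes S)$ against the explicit signs in the definitions of $\delmm$ and $\delma$ for the adjoint complex, while keeping track of the orientation of the equivalence (here from the primed to the unprimed deformation). Depending on the convention this produces a single overall sign on both components simultaneously, which is harmless and can be absorbed by replacing $S_n$ with $-S_n$. Everything else—the vanishing of the sub-leading orders and the linearity of $\del$—is routine.
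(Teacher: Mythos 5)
Your proof is correct and is exactly the argument the paper has in mind (it omits the proof, deferring to the Hom-associative case in the cited reference): subtract the $n$-th deformation equations, whose right-hand sides $(R^1_n,R^2_n)$ depend only on the coefficients of order at most $n-1$ and hence coincide, and expand the equivalence conditions with the ansatz $S=\id+S_nt^n$ at order $n$ to recognize $\delmm S_n$ and $\delma S_n$. The overall sign you flag is indeed a matter of orientation of the equivalence and is absorbed by replacing $S_n$ with $-S_n$, so nothing is missing.
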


So the construction of an equivalence is a problem in $\aLH^2(\g,\g)$, and we get 
\begin{corollary}
	If $\aLH^2(\g,\g) = 0$, then all deformations are trivial, i.e.\ are equivalent to the undeformed algebra. 
\end{corollary}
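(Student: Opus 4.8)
The plan is to argue by induction on the order, using an equivalence at each stage to kill the lowest nontrivial deformation term, and then to pass to the infinite composition. The trivial deformation is the undeformed algebra itself, i.e.\ $\nu'_t = \nu$ and $\alpha'_t = \alpha$ with all higher coefficients $\nu'_i = \alpha'_i = 0$ for $i \geq 1$. I want to produce a formal morphism $S = \id + \sum_{i\geq 1} S_i t^i$ carrying the given deformation $(\nu_t,\alpha_t)$ to this trivial one, thereby exhibiting $(\nu_t,\alpha_t)$ as equivalent to $(\nu,\alpha)$.

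First I would set up the inductive hypothesis: after applying a suitable equivalence, the deformation agrees with the trivial one up to order $n-1$. The base case $n=1$ is immediate, since every deformation agrees with the undeformed algebra at order $0$ by definition. For the inductive step, the current deformation and the trivial deformation agree up to order $n-1$, so the preceding Proposition yields $\del(\nu_n,\alpha_n) = 0$; that is, $(\nu_n,\alpha_n)$ is a $2$-cocycle.

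The crucial point is then to invoke $\aLH^2(\g,\g)=0$ together with the degree conventions of the paper. Here I would stress that $\aLC^1(\g) = \Hom(\Lambda^1\g,\g)\oplus\Hom(\Lambda^0\g,\g) = \Hom(\g,\g)$, because $\Hom(\Lambda^0\g,\g)$ is set to $\{0\}$. Consequently every $1$-cochain is automatically of the form $(S_n,0)$, and the vanishing of the second cohomology produces a linear map $S_n\colon \g\to\g$ with $\del(S_n,0) = (\nu_n,\alpha_n)$. By the Proposition this $S_n$ defines an equivalence up to order $n$; applying the equivalence $\id + S_n t^n$ replaces the deformation by one agreeing with the trivial deformation up to order $n$, while leaving all lower-order terms unchanged (so the inductive hypothesis is preserved as we climb).

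Finally I would assemble the infinite composition $S = \cdots (\id + S_3 t^3)(\id + S_2 t^2)(\id + S_1 t)$. Since the factor $\id + S_n t^n$ affects only the coefficients of $t^n$ and higher, this product is well defined in the $t$-adic topology on $\End(\g)\ph$: each fixed power of $t$ receives contributions from only finitely many factors. The one point demanding genuine care --- and the main obstacle --- is precisely this convergence step: I must verify that the infinitely many order-by-order equivalences stabilize at every fixed order in $t$, so that $S$ is an honest formal morphism and its action sends $(\nu_t,\alpha_t)$ to $(\nu,\alpha)$ exactly, rather than merely annihilating each deformation term in isolation without producing a single limiting equivalence. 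Once this is checked, $S$ is the desired equivalence to the undeformed algebra, and the corollary follows.
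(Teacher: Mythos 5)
Your proof is correct and is exactly the argument the paper intends: the corollary is stated without proof as an immediate consequence of the preceding proposition, and your order-by-order induction (kill the lowest nonvanishing term $(\nu_n,\alpha_n)$, which is a $2$-cocycle, using $\aLH^2(\g,\g)=0$ and the convention $\Hom(\Lambda^0\g,\g)=\{0\}$ forcing every $1$-cochain to have the form $(S_n,0)$, then compose the equivalences $t$-adically) is the standard way to fill it in. Your attention to the fact that coboundaries are automatically of the shape $\del(S_n,0)$ required by the proposition, and to the convergence of the infinite composition, are exactly the right points to check.
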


\section{Examples}

Next, we  compute the $\alpha$-type Chevalley-Eilenberg cohomology explicitly for some low dimensional Hom-Lie algebras by using a computer software. 

First we consider a Hom-Lie algebra, which is not of Lie-type.  Let $\g$ be a vector space with   basis  given by $(x,y,z)$. Then we define $\alpha$ by $\alpha(x)=x, \alpha(y) =y, \alpha(z)=0$ and a bracket by 
\begin{equation}
	[x,y]  = x, \; [y,z]  = z \text{ and } [x,z] = z. 
\end{equation}
It is easy to verify that this is in fact a Hom-Lie algebra and it is not of Lie-type since $z \in \img \nu$ but $z \notin \img(\alpha)$.
The following table gives the dimensions of the cohomology and of the coboundaries and cocycles:
\begin{center}
	\begin{tabular}{c|c|c|c|c}
		$i$ & $\dim \aLC^i$ & $\dim \img \del^i$ & $\dim \ker \del^i$ & $\dim \aLH^i$ \\
		\hline
		1   & 9             & 8                 & 1                  & 1             \\
		2   & 18            & 8                 & 10                 & 2             \\
		3   & 12            & 2                 & 10                 & 2             \\
		4   & 3             & 0                 & 3                  & 1             
	\end{tabular}
\end{center}

The cohomology space can be spanned by the following maps, where $\phi \in \aC^n_\nu(\g,\g) = \Hom(\Lambda^{n}\g,\g)$ and $\psi \in \aC^n_\alpha(\g,\g) = \Hom(\Lambda^{n-1}\g,\g)$:
\begin{align*}
	\aLH^1(\g,\g) : & \phi(z) = \lambda z     ,                                                       \\
	\aLH^2(\g,\g) : & \phi \equiv 0, \psi(z)= \lambda_1 z , g(x) = (\lambda_1+ \lambda_2) x , \psi(y) = \lambda_2 y, \\
	\aLH^3(\g,\g) : &  \phi \equiv 0,\psi(x,z) = \lambda_1(z) x  , \psi( y,z) = \lambda_2 z ,                       \\
	\aLH^4(\g,\g) : &  \phi \equiv 0,\psi(x,y,z) =  \lambda z        .                                               
\end{align*}

In the following  we consider a Hom-Lie algebra, which is of Lie type. For this we consider a vector space $\g$ spanned by $x,y,z$ and a Lie bracket defined by 
\begin{equation}
	[x,y] = x , [x,z] =x, [y,z] = y - z.
\end{equation}
The dimensions of the cohomology spaces are as  follows:
\begin{center}
	\begin{tabular}{c|c|c|c|c}
		$i$ & $\dim \aLC^i(\g,\g)$ & $\dim \img \del^i$ & $\dim \ker \del^i$ & $\dim \aLH^i(\g,\g)$ \\
		\hline
		1   & 9             & 3                 & 6                  & 6             \\
		2   & 18            & 6                 & 12                 & 9             \\
		3   & 12            & 3                 & 9                 & 3            \\
		4   & 3             & 0                 & 3                  & 0             
	\end{tabular}
\end{center}
It is enough to give its ordinary Chevalley-Eilenberg cohomology to know the $\alpha$-type Chevalley-Eilenberg cohomology. It is given by
\begin{align*}
	\LH^1(\g,\g) :& \phi(x) = \lambda_1x - \lambda_2 y + \lambda_2 z \\
	 & \phi(y) = \lambda_3 x - \lambda_4 y + \lambda_4 z \\
	 & \phi(z) = \lambda_5 x - \lambda_6 y + \lambda_6 z \\
	 \LH^2(\g,\g) :& \phi(x,y) = \lambda_1 z, \phi(x,z) = - \lambda_1 y  \\
	 	&\phi(y,z) = \lambda_2 x + \lambda_3z - \lambda_3 y,	 
\end{align*}
where $\lambda_i$ are parameters.

It is easy to see that $\gamma(x) =0, \gamma(y) =y , \gamma(z)=z$ defines a morphism of $\g$.
So the Yau twist $\g_\gamma$ is  a Hom-Lie algebra, with structure map $\gamma$ and the only non vanishing bracket is $[y,z]_\gamma= y-z$.
The dimensions  of the cohomology are given in the following table:
\begin{center}
	\begin{tabular}{c|c|c|c|c}
		$i$ & $\dim \aLC^i(\g_\gamma,\g_\gamma)$ & $\dim \img \del^i$ & $\dim \ker \del^i$ & $\dim \aLH^i(\g_\gamma,\g_\gamma)$ \\
		\hline
		1   & 9             & 6                 & 3                  & 3             \\
		2   & 18            & 7                 & 11                 & 5             \\
		3   & 12            & 2                 & 10                 & 3            \\
		4   & 3             & 0                 & 3                  & 1             
	\end{tabular}
\end{center}
The derivations and $\alpha$-derivations of $\g_\gamma$ can be obtained from the derivations of $\g$, which are compatible with $\gamma$. They are both given by:
\begin{equation}
	f(x) = \lambda_1 x,f(y) = \lambda_2 y -\lambda_2 z,f(z) = \lambda_3 y -\lambda_3 z.
\end{equation}
So they can be seen as part of the first and second cohomology.
The remaining cohomology is spanned by 
\begin{align}
	\aLH^2(\g_\gamma,\g_\gamma) &: \phi(x,y) =\lambda_1 x , \phi( x,z) =\lambda_2 x, \\
	\aLH^3(\g_\gamma,\g_\gamma) &: \phi(x,y,z) =\lambda_1 x , \psi( x,y) =\lambda_2 x, \psi( x,z) =\lambda_3 x, \\ 
	\aLH^4(\g_\gamma,\g_\gamma) &: \psi(x,y,z) =\lambda_1 x .
\end{align}
This cannot be obtained using \cref{th:morph}, since $x \notin \img \gamma$. 
So the cohomology of $\g$ and $\g_\gamma$ differ and most of the cohomology from $\g_\gamma$ cannot be obtained from the one of $\g$. Also all cocycles of $\g$ of degree $2$ or higher are either not compatible with $\gamma$ or map to zero under $\Phi$ and so do not contribute the the cohomology of $\g_\gamma$.

\section{Grand Crochet}\label{sc:grandcrochet}

It is well known that the Grassmann algebra $\Lambda^\bullet \g = S(\g[1])$  of a vector space $\g$ is a Hopf algebra, with coproduct given by $\Delta(x) = 1\otimes x + x \otimes 1$ for $x \in \g$, and extended to the rest of $\Lambda^\bullet \g$ as algebra morphism. By $\g[1]$ we mean the shifted space. This means an element $x\in \g$ has degree 1, since we assume $\g$ to be concentrated in degree 0.  
So maps on $\Lambda^\bullet \g$ are graded and we use the Koszul sign rule, i.e. $(\phi \otimes \psi)(x\otimes y) = (-1)^{\deg (\psi) \deg(x)} \phi(x) \otimes \psi(y). $

The structure maps $\alpha$ and $\beta$ can be extended to Hopf algebra morphism on $\Lambda^\bullet \g$, which we also denote by $\alpha$ and $\beta$ respectively.
 
We give a generalization to Hom-Lie algebras of the grand crochet defined by Lecomte and Roger \citep{lecomte}. For the case $\beta=\alpha^{-1}$ this was already done in \citep{shengbb}.

Let $\phi \in \Hom(\Lambda^\bullet\g,\Lambda^\bullet\g)$ for a vector space $\g$ with two structure maps $\alpha,\beta: \g \to \g$ which commute. Normally $\g$ will be a Lie bialgebra and one can regard  $\g$ here as a Hom-Lie bialgebra with trivial bracket and cobracket.

We define the  $\alpha$- and $\beta$-height of $\phi$, as integers $\abs{\phi}_\alpha$ and $\abs{\phi}_\beta$ respectively. We also write $\abs{\phi} = (\abs{\phi}_\alpha,\abs{\phi}_\beta)$. 
The height of $\alpha$ is $(1,0)$ and the height of $\beta$ is $(0,1)$. This explains the name. Further if $\g$ is a Lie bialgebra we set $\abs{\nu} = (1,0)$ and $\abs{\delta}=(0,1)$. 
More generally $\abs{\phi} = (i-1,j-1)$ for $\phi \in  \Hom(\Lambda^\bullet\g,\Lambda^\bullet\g)$ with the usual action and coaction, so for example in $\LC^{i,j}(\g)$. 
If the action is twisted by $\alpha^k$ this is $a \cdot x = [\alpha^k(a),x]$ and the coaction by $\beta^l$ then $\abs{\phi} = ( i+k-1,j+l-1)$. So e.g.\ $\abs{\phi}=(i,0)$ for $\phi \in \aC^i_\alpha(\g)$.   
On the other hand if we say $\phi$ has a certain height, we also assume the actions to be twisted accordingly. 

Given  maps $\phi_i$ and $\psi_i$  with height such that    $\abs{\phi_i} = \abs{\phi_j}$ and   $\abs{\psi_i} = \abs{\psi_j}$ for all $i,j$.  We set $\abs{(\phi_1 \otimes \dots \otimes \phi_k) (\psi_1 \otimes \dots \otimes \psi_l)}= \abs{\phi_i} + \abs{\psi_i}$.

More generally given an arbitrary composition of maps with height one can follow the paths from the inputs to the outputs and associate a height to them by adding the heights of the maps one passes. If all paths have the same height we call it homogeneous.

We set the height of the  product and coproduct on $\Lambda^\bullet \g$ to zero. By $\pr_\g$ or simply $\pr$ we denote the projection from $\Lambda^\bullet \g \to \g$. Note that $ \pr \mu = \mu(\pr \otimes \id + \id \otimes \pr )$.

\begin{remark}
	Note that all maps in the differentials, the compatibility condition and so on for Hom-algebras of any type we considered  are homogeneous. 
	Also most compositions can be made homogeneous be inserting $\alpha$ and $\beta$ as needed. This is even how most definitions can be obtained. One considers  the non-Hom case and adds the structure maps as needed. However it can happen that $\alpha^{-1}$, which has $\abs{\alpha^{-1}}=(-1,0)$, would be needed.
\end{remark}

Now we can define the grand crochet.

Let $\phi, \psi \in \Hom(\Lambda^\bullet \g,\Lambda^\bullet \g)$ be maps with height, then we define a product by
\begin{equation}
	\psi \circ \phi = \mu (\psi \otimes \alpha^\psi) (\mu \otimes \id) (\id \otimes \pr \otimes \id) (\id \otimes \Delta) (\alpha^\phi \otimes \phi) \Delta
\end{equation}
and with that 
\begin{equation}\label{eq:bb}
	\bb{\phi ,\psi} = \phi \circ \psi - (-1)^{\deg(\phi)\deg(\psi)} \psi \circ \phi.
\end{equation}
Here we used the shorthand $\alpha^\phi = \alpha^{\abs{\phi}_\alpha}\beta^{\abs{\phi}_\beta}$, which has the same height as $\phi$.

For $\phi \in \Hom(\Lambda^k\g,\g)$ and $\psi \in \Hom(\Lambda^l \g,\g)$  the  product $\circ$ can be written as 
\begin{multline}
	(\phi \circ \psi)(x_1, \dots, x_{k+l-1}) =  \frac{1}{k!(l-1)!} \sum_{\sigma \in S_{n+k}}  \phi( \psi( x_{\sigma (1)} ,\dots, x_{\sigma(k)}), \\ \alpha^{\abs{\phi}_\alpha}(x_{\sigma(k+1)}) ,\dots,\alpha^{\abs{\phi}_\alpha}(x_{\sigma(k+l-1)})).
\end{multline}

\begin{prop}\label{th:bbdif}
	For  $(\phi,\psi) \in \aLC(\g,\g) = \Hom(\Lambda^\bullet \g, \g)$  and $\chi_1, \chi_2 \in \Hom(\Lambda^\bullet\g,\Lambda^\bullet \g)$ with arbitrary heights we have:
	\begin{itemize}
		\item $\delmm \phi =(-1)^{k-1} \bb{\nu,\phi}$. 
		\item $\delaa \psi =(-1)^{k-1} \bb{\nu,\psi}$. 
		\item $\delma (\chi_1 \circ \chi_2) =  (\delma \chi_1 \circ \chi_2)+ (\chi_1 \circ \delma \chi_2)$
	\end{itemize}
\end{prop}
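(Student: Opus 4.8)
The plan is to prove the three identities separately, handling the first two together since they share the same structure, and to work throughout with the Sweedler/coproduct form of the product $\circ$ rather than its permutation expansion, as this makes the powers of the structure map transparent.

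For the first two bullets I would expand $\bb{\nu,\phi} = \nu\circ\phi - (-1)^{\deg(\nu)\deg(\phi)}\phi\circ\nu$ via \eqref{eq:bb}; since $\deg(\nu)=\deg(\verm2)=0$ the sign is trivial and I am left with $\nu\circ\phi-\phi\circ\nu$. I would then evaluate each summand on $x_1\wedge\cdots\wedge x_{k+1}$. In $\nu\circ\phi$ the coproduct $\Delta$ splits the argument: $\phi$ can only act on a degree-$k$ component, the complementary single element is hit by $\alpha^\phi=\alpha^{\abs{\phi}_\alpha}$, the projection $\pr$ in the middle slot kills the summand where $\Delta$ deposits the counit $1$ there (using that $\phi$'s output is primitive), and the outer $\mu(\nu\otimes\alpha^\nu)$ collapses because $\alpha^\nu$ meets the counit factor. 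This leaves $\sum_i\pm[\alpha^{\abs{\phi}_\alpha}(x_i),\phi(x_1,\dots,\hat x_i,\dots)]$, exactly the adjoint summand of $\delmm\phi$, with the power of $\alpha$ dictated by the height $\abs{\phi}_\alpha$. Dually, in $\phi\circ\nu$ only the degree-$2$ component can feed $\nu$, with $\alpha^\nu=\alpha$ decorating the remaining arguments, producing $\sum_{i<j}\pm\phi([x_i,x_j],\alpha(x_1),\dots)$, the inner summand of $\delmm\phi$. Matching the shuffle signs coming from $\Delta$ against the explicit $(-1)^{i+1}$ and $(-1)^{i+j-1}$, together with the Koszul signs of the shifted grading on $S(\g[1])$, yields the global factor $(-1)^{k-1}$. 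The computation for $\delaa\psi$ is word-for-word identical with $\psi$ in place of $\phi$, the only difference being the value of the relevant height, so the second bullet follows from the first.

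For the third bullet I would first record that $\delma$ is the commutator $L_\alpha-R_\alpha$ with the Hopf-algebra extension of $\alpha$ to $\Lambda^\bullet\g$, i.e.\ $\delma\chi=\alpha\chi-\chi\alpha$ (this is how $\delma\phi=\alpha\phi-\phi\alpha^{\otimes n}$ reads once $\alpha$ is extended). The Leibniz rule then reduces to three elementary identities obtained by pushing a single copy of $\alpha$ through the string defining $\circ$:
\begin{gather*}
	\alpha(\chi_1\circ\chi_2) = (\alpha\chi_1)\circ\chi_2, \qquad
	(\chi_1\circ\chi_2)\alpha = \chi_1\circ(\chi_2\alpha), \\
	(\chi_1\alpha)\circ\chi_2 = \chi_1\circ(\alpha\chi_2).
\end{gather*}
Each uses only that $\alpha$ is a grading-preserving Hopf morphism (it commutes with $\mu$, $\Delta$ and $\pr$) and is compatible with the height bookkeeping, $\alpha^{\alpha\chi}=\alpha\,\alpha^\chi$. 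For the first, $\alpha$ sits at the outer $\mu$ and splits as $\mu(\alpha\otimes\alpha)$, the surplus $\alpha$ being absorbed into $\alpha^{\chi_1}$ to form $\alpha^{\alpha\chi_1}$; for the second, $\alpha$ sits at the innermost $\Delta$ and splits as $(\alpha\otimes\alpha)\Delta$, absorbed into $\alpha^{\chi_2}$; the third, the genuine "middle" case, requires sliding $\alpha$ all the way through $\mu$, $\pr$ and $\Delta$. Subtracting $R_\alpha$ from $L_\alpha$, the cross terms $(\chi_1\alpha)\circ\chi_2$ and $\chi_1\circ(\alpha\chi_2)$ coincide by the third identity and so cancel, leaving precisely $(\delma\chi_1\circ\chi_2)+(\chi_1\circ\delma\chi_2)$.

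The main obstacle is not conceptual but the sign- and power-of-$\alpha$ bookkeeping in the first two bullets: one must simultaneously track the Koszul signs forced by placing $\g$ in degree $1$, the shuffle signs produced by the iterated coproduct, and the exact exponent of $\alpha$ deposited on each argument by the heights $\abs{\phi}$ and $\abs{\nu}$, and then check that they conspire to give the single clean factor $(-1)^{k-1}$ and exactly the differentials $\delmm,\delaa$. The third bullet, by contrast, is a formal consequence of $\alpha$ being a Hopf morphism and goes through with no sign subtleties.
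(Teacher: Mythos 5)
Your proposal is correct and takes essentially the same route as the paper's own proof: the first two bullets are obtained exactly as in the paper by evaluating $\nu\circ\phi$ and $\phi\circ\nu$ in Sweedler form (the primitivity of $\phi$'s output killing the spurious coproduct terms) to recover the two summands of $\delmm$, resp.\ $\delaa$, and the third bullet by writing $\delma$ as the commutator with the Hopf-algebra extension of $\alpha$ and cancelling the cross terms via the middle-slide identity $(\chi_1\alpha)\circ\chi_2=\chi_1\circ(\alpha\chi_2)$, which is precisely the identity the paper verifies by pushing $\alpha$ through $\mu$, $\pr$ and $\Delta$, with the same height bookkeeping $\alpha^{\alpha\chi}=\alpha\,\alpha^{\chi}$. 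The only difference is cosmetic: you state the two outer sliding identities $\alpha(\chi_1\circ\chi_2)=(\alpha\chi_1)\circ\chi_2$ and $(\chi_1\circ\chi_2)\alpha=\chi_1\circ(\chi_2\alpha)$ explicitly, where the paper uses them tacitly.
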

\begin{proof}
	We have that 
	\begin{equation}
		\nu \circ \phi = (-1)^{k-1}[ \alpha^k( x^{(1)}), \phi(x^{(2)})   ], \phi \circ \nu = (-1)^{k-1} \phi( \alpha(x^{(1)}) \wedge \nu(x^{(2)})), 
	\end{equation}
	where we used the Sweedler notation $\Delta(x) = x^{(1)} \otimes x^{(2)}$ for the coproduct in $\Lambda^\bullet \g$. This shows that $\delmm \phi =(-1)^{k-1} \bb{\nu,\phi}$. 

	The second statement follows from 
	\begin{align*}
		\delma \chi_1 \circ \chi_2 + \chi_1  \delma \circ \chi_2 &=  \alpha' \chi_1  \circ \chi_2  -   \chi_1 \alpha'\circ \chi_2  + \chi_1 \circ \alpha' \chi_2 - \chi_1 \chi_2 \alpha' \\ &= 
		\alpha' \chi_1  \circ \chi_2  - \chi_1 \chi_2 \alpha' = \delma (\chi_1 \circ \chi_2).
	\end{align*}
	Using $ \chi_1 \circ\alpha' \chi_2 = \mu (\chi_1 \otimes \alpha^\chi_1) Q (\alpha^{\chi_2} \alpha' \otimes \alpha' \chi_2) =  \mu (\chi_1 \alpha'\otimes \alpha^\chi_1 \alpha') Q (\alpha^{\chi_2}  \otimes \chi_2) = \chi_1 \alpha'\circ \chi_2 $. Also remember that $\alpha' \chi_1$ and $\delma \chi_1$ have the height $\abs{\chi_1} + \abs{\alpha}$.
\end{proof}

\section{\texorpdfstring{$\alpha$}{alpha}-type cohomology for Hom-Lie bialgebras}\label{sc:alphabicoho}

We  define an $\alpha$-type  cohomology for Hom-Lie bialgebras, where $\alpha =\beta$. We only give the definition, state the basic facts  and do not give any details of the proofs here. 
The case $\alpha$ different from  $\beta$ is more complicated than the case of Hom-associative bialgebras, since the compatibility of the product and the coproduct involves the structure maps $\alpha$ and $\beta$. 
Notice that if  $\alpha=\beta$,  the complex is no longer a bicomplex.

 For a Hom-Lie bialgebra $(\g,\nu,\delta,\alpha,\alpha)$ we set
\begin{equation}
	\abLC^k (\g)=  \bLC_\nu^k(\g) \oplus \bLC_\alpha^k(\g) = \bigoplus_{l=1}^{k} \Hom(\Lambda^l \g, \Lambda^{k-l+1}\g) \oplus \bigoplus_{i=1}^{k-1} \Hom(\Lambda^l \g, \Lambda^{k-l} \g). 
\end{equation}
Fur further set $\bLC^{ij}_\alpha(\g) = \Hom(\Lambda^i \g, \Lambda^{j} \g) \subset \bLC_\alpha^{i+j}(\g)$ and  $\bLC^{ij}_\nu(\g) = \Hom(\Lambda^i \g, \Lambda^{j} \g) \subset \bLC_\nu^{i+j-1}(\g)$.

Similarly to the previous section it is convenient to define the height of an element in the cohomology. Since here we do not distinguish between $\alpha$ and $\beta$, the height is an integer and not a pair. With this we define a grand crochet  as before.   
The height of an element in $\bLC^{ij}_\alpha(\g)$ is $i+j-1$ and of one in $\bLC^{ij}_\nu(\g)$ is $i+j-2$.

For $\phi_{ij} \in \Hom(\Lambda^i \g, \Lambda^{j} \g) \subset \bLC_\nu^{i+j-1}(\g)$ and $\psi_{ij} \in \Hom(\Lambda^i \g, \Lambda^{j} \g) \subset \bLC_\alpha^{i+j}(\g)$, we set 
\begin{align}
	\del \phi &= (\delmm \phi , \delma \phi , \delmm^c \phi ) \in \bLC^{i+1,j}_\nu \oplus \bLC^{i,j}_\alpha \oplus \bLC^{i,j+1}_\nu \\
	\begin{split}
	\del \psi &= ( \delam \psi ,-\delaa \psi, \del_b \psi,-\delaa^c \psi , \delam^c \psi ) \\
	&\in \bLC^{i+2,j}_\nu \oplus \bLC^{i+1,j}_\alpha  \oplus \bLC^{i+1,j+1}_\nu  \oplus \bLC^{i,j+1}_\alpha \oplus \bLC^{i,j+2}_\nu. 
	\end{split}
\end{align}
Here $\delaa  = \bb{\nu, \cdot}$, $\delaa^c = \bb{\delta, \cdot}$, $\delma \phi = \alpha \phi - \phi \alpha$ and $\delma \phi = \mu ( \alpha^{\abs{\phi-1}} \nu  \wedge \phi)\Delta$. 
So there are the same as in the definition of the $\alpha$-type cohomology for Lie algebras and their duals. 
The map $\del_b$ is defined by 
\begin{align}
	\begin{split}
		\del_b \psi &= \del_{b1} + \del_{b2} \\
 		&=\mu^3(\id \otimes \nu \otimes  \id)(\pr \otimes \mu \otimes \id )(\Delta \otimes \Delta) ( \delta \alpha^{ \abs{\psi}-1} \otimes \psi )\Delta  \\
		&\quad +\mu (\alpha^{ \abs{\psi}-1} \nu  \otimes \psi )(\mu \otimes \mu) (\pr \otimes \Delta \otimes \id) (\id \otimes \delta \otimes  \id)\Delta^3.
	\end{split}
\end{align}
This means for $\psi \in \bLC^{k,l}_\alpha(\g)$
\begin{align*}
	\del_{b1} \psi (x_1,\dots,x_{k+1}) &= \sum_{i=1}^{k+1} (-1)^i \delta(\alpha^{k+l} x_i) \cdot \psi(x_1, \dots ) \\
	&=   \sum_{i<j} (-1)^{i+j-1} \alpha^{k+l}(x_i^{[1]}) \wedge  (\alpha^{k+l}(x_i^{[2]}) \cdot  \psi(x_1, \dots,\hat x_i,\dots, x_{k+1} )),
\end{align*}
where $\delta(x) = x^{[1]}\wedge x^{[2]}$, and $\del_{b2}$ is the dual given by
\begin{align*}
	\del_{b2} \psi (x_1,\dots,x_{k+1}) &= \sum_{i=1}^{k+1} (-1)^i [\alpha(x_i),x_j^{[1]}] \wedge \psi(x_j^{[2]},\alpha(x_1),\dots,\hat x_i,\hat x_j, \alpha(x_{k+1})).
\end{align*}

\begin{theorem}
	The map $\del$ defined above  is a differential for the complex $\abLC(\g)$.
\end{theorem}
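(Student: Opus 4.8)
The plan is to establish $\del^2=0$ by checking it componentwise in the bigrading $(i,j)$ and reducing each resulting identity either to the grand-crochet reformulation of \cref{th:bbdif} or to one of the Hom-Lie bialgebra axioms. The guiding picture is that the bracket--cobracket datum $\mathbf m=\nu+\delta\in\Hom(\Lambda^\bullet\g,\Lambda^\bullet\g)$ plays the role of a Maurer--Cartan element and that the algebraic part of $\del$ is its adjoint action $\bb{\mathbf m,\cdot}$, so that $\del^2=0$ should mirror the classical identity $\bb{\mathbf m,\bb{\mathbf m,\cdot}}=\tfrac12\bb{\bb{\mathbf m,\mathbf m},\cdot}=0$, whose three homogeneous pieces are the Hom-Jacobi identity, its co-dual, and the compatibility~\eqref{eq:liebi}. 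The point of the theorem, however, is that this picture is only literal in the non-Hom case: in the Hom setting the grand crochet is not a priori a graded Lie algebra, the structure-map insertions $\alpha^{\abs{\cdot}-1}$ obstruct the naive Jacobi identity, and the terms $\del_b$ and $\delma$ are exactly what must be added to repair it. I would therefore carry out the verification directly, organized by the summand of $\abLC$ in which each term lands.

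Concretely, applying $\del$ twice to $\phi\in\bLC^{ij}_\nu$ produces, after grouping by target bidegree, five identities: in $\bLC^{i+2,j}_\nu$ one needs $\delmm\delmm=\pm\delam\delma$; in $\bLC^{i+1,j}_\alpha$ one needs $\delma\delmm=\pm\delaa\delma$; in $\bLC^{i,j+2}_\nu$ and $\bLC^{i,j+1}_\alpha$ one needs the two co-dual identities $\delmm^c\delmm^c=\pm\delam^c\delma$ and $\delma\delmm^c=\pm\delaa^c\delma$; and in $\bLC^{i+1,j+1}_\nu$ one needs the mixed identity $\delmm^c\delmm+\delmm\delmm^c+\del_b\,\delma=0$. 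Squaring $\del$ on $\psi\in\bLC^{ij}_\alpha$ gives a parallel but longer list, in which $\del_b$ additionally meets the bracket- and cobracket-differentials and splits according to its two summands $\del_{b1},\del_{b2}$.

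For the purely $\nu$-type identities (those involving only $\delmm,\delaa,\delam$ and $\delma$) I would invoke the main theorem of \cref{sc:coho}: these operators are the $\alpha$-type Chevalley--Eilenberg differential acting on $\Lambda^\bullet\g$ under the adjoint action, so the required relations are exactly the ones proved there and rest on the Hom-Jacobi identity and the multiplicativity of $\nu$ and $\alpha$; via \cref{th:bbdif} they read off as the $\nu$-part of $\bb{\nu,\bb{\nu,\cdot}}=\tfrac12\bb{\bb{\nu,\nu},\cdot}$. The purely $\delta$-type identities are obtained verbatim by the duality exchanging $\mu\leftrightarrow\delta$ and $\Delta\leftrightarrow\nu$, using co-Hom-Jacobi. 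The defect terms are handled by the Leibniz rule $\delma(\chi_1\circ\chi_2)=\delma\chi_1\circ\chi_2+\chi_1\circ\delma\chi_2$ of \cref{th:bbdif}: since the multiplicativity axioms give $\delma\nu=\alpha\nu-\nu\alpha=0$ and $\delma\delta=\alpha\delta-\delta\alpha=0$, the operator $\delma$ commutes with both $\bb{\nu,\cdot}$ and $\bb{\delta,\cdot}$, and because no $\delma$-component leaves $\bLC_\alpha$ the defect enters each length-two path only once; the relations $\delma\delmm=\pm\delaa\delma$ and $\delma\delmm^c=\pm\delaa^c\delma$ then cancel these contributions against the $\delaa,\delaa^c$ terms.

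The main obstacle is the mixed identity $\delmm^c\delmm+\delmm\delmm^c+\del_b\,\delma=0$ and its counterparts coming from $\psi$. In the non-Hom limit $\delmm^c\delmm+\delmm\delmm^c=\bb{\bb{\nu,\delta},\cdot}$ vanishes by the bialgebra compatibility $\bb{\nu,\delta}=0$ and $\del_b$ is absent; in the Hom case the compatibility~\eqref{eq:liebi} no longer forces $\bb{\nu,\delta}$ to vanish, and the term $\del_b$---with $\del_{b1}$ supplying the extra output leg created when $\delta$ follows $\nu$ and $\del_{b2}$ the extra input leg created when $\nu$ follows $\delta$---is engineered exactly to absorb the residual $\alpha$-twisted cross-terms. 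The work here is twofold: first to fix all signs and the symmetrizations hidden in the projections $\pr$, the coproducts $\Delta$ and the powers $\alpha^{\abs{\psi}-1}$ appearing in the definition of $\del_b$; and second to check that the heights of every composite agree, since only for homogeneous composites may the grand-crochet manipulations of \cref{sc:grandcrochet} be applied. Once this mixed identity is verified---at which point it is seen to be a reformulation of~\eqref{eq:liebi}---assembling all the componentwise relations over every bidegree yields $\del^2=0$.
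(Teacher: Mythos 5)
The paper offers no proof to compare you against: its ``proof'' consists of the sentence that the verification is a very lengthy calculation which is avoided. Judged on its own terms, your reduction scheme is correctly organized. The bidegree bookkeeping is exact: $\del^2\phi$ for $\phi\in\bLC^{ij}_\nu$ does split into the five identities you list, and $\del^2\psi$ for $\psi\in\bLC^{ij}_\alpha$ into the longer list in which $\del_b$ meets $\delaa$, $\delaa^c$, $\delmm$, $\delmm^c$ and $\delma$. Disposing of the pure bracket identities ($\delmm\delmm=\pm\delam\delma$, $\delma\delmm=\pm\delaa\delma$, and their analogues on $\bLC_\alpha$) via the main theorem of \cref{sc:coho} is legitimate, with one point you leave implicit: that theorem was proved for coefficients in a Hom-Lie module $(M,\beta)$, so you must actually verify that the $\alpha$-power-twisted adjoint action makes $(\Lambda^j\g,\alpha^{\otimes j})$ such a module, the twist depending on the height of the component in question; this is true but is a lemma, not a citation. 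Your mechanism for the $\bLC_\alpha$-valued components --- the Leibniz rule of \cref{th:bbdif} combined with $\delma\nu=0$ and $\delma\delta=0$ from multiplicativity, so that $\delma$ commutes with $\bb{\nu,\cdot}$ and $\bb{\delta,\cdot}$ --- is the right one, and your observation that in the limit $\alpha=\id$ one has $\delma=0$ and recovers the classical statement is a sound consistency check.

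The genuine gap is that the mixed identities involving $\del_b$ --- on $\bLC_\nu$ the relation $\delmm^c\delmm\phi+\delmm\delmm^c\phi+\del_b\,\delma\phi=0$, and on $\bLC_\alpha$ the three- and four-term relations such as $\delaa\delaa^c\psi+\delaa^c\delaa\psi+\delma\del_b\psi=0$ and $\delmm^c\delam\psi-\del_b\delaa\psi+\delmm\del_b\psi-\delam\delaa^c\psi=0$ --- are precisely where the entire content of the theorem sits, and your proposal only localizes them. You assert that $\del_b$ is ``engineered exactly to absorb'' the residual $\alpha$-twisted cross-terms and that the identity ``is seen to be a reformulation of~\eqref{eq:liebi}'', but no computation is given: the reduction to the compatibility axiom is announced as the expected outcome rather than derived, and the sign and height fixing you defer is not a routine afterthought, since the cancellation pattern between $\del_{b1}$, $\del_{b2}$ and the compositions $\delmm\delmm^c$, $\delmm^c\delmm$ depends on exactly those symmetrizations and $\alpha$-powers (note that the paper's displayed formulas for $\del_{b1}$ and $\del_{b2}$ contain inconsistencies that must first be resolved before any cancellation can be checked). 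So what you have is a correct and well-structured reduction: everything outside the $\del_b$-identities is genuinely settled by your argument, but those identities are the theorem, and for them your proposal is a plan of verification rather than a proof.
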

\begin{proof}
 This is a very lengthy calculation, which we avoid here.
\end{proof}

\begin{remark}
	The differential is defined such that the first order term of a  deformation is a cocycle. 
	Furthermore similar to \cref{sc:linfty} we found the low order terms of an $L_\infty$-structure which has as Maurer-Cartan elements Hom-Lie algebras and the cohomology in low degrees  one gets from this is the one describe above. So the statements relating cohomology and deformations are true. 		
\end{remark}

Given a Lie bialgebra $\g$  and an endomorphism $\gamma:\g \to \g$, we define a cohomology for $\gamma$. For the case of arbitrary Lie bialgebra morphisms this can been found in \citep{fregier2}. We modify the definition slightly to better agree with the definition of $\abLC(\g)$. 
The complex is given as the complex for the $\alpha$-type cohomology. We write however $C_\nu$ instead of $\bLC_\nu$ and $C_\gamma$ instead of $\bLC_\alpha$.
The differential for $\phi \in C_\nu^{i,j}(\gamma)$ and $\psi \in C_\gamma^{i,j}(\gamma)$ is given by 
\begin{align}
	\del \phi &=  (\del_{CE} \phi , \del_\gamma \phi, \del^c_{CE} \phi) \in C^{i+1,j}_\nu \oplus  C^{i,j}_\alpha \oplus  C^{i,j+1}_\nu , \\
	\del \psi &=  (-\del_{CE} \psi , -\del^c_{CE} \phi) \in C^{i+1,j}_\alpha  \oplus  C^{i,j+1}_\alpha . 
\end{align}
Here $\del_{CE}$ is the ordinary Chevalley-Eilenberg differential and $\del_{CE}^c$ its dual. The action on $\psi$ is given by $x \cdot \psi = \gamma(x) \psi$.

There is a morphism $\Phi: C^\bullet(\gamma) \to \abLC^\bullet(\g_\gamma)$ of complexes given by 
\begin{align}
	\Phi(\phi) &= ( \gamma^{i-1} \phi \gamma^{j-1}), \\
	\Phi(\psi) &= ( \gamma^{i-1} ( \psi  \circ \nu) \gamma^{j-1} , \gamma^{i-1} \psi \gamma^{j-1},  \gamma^{i-1} ( \delta  \circ \psi) \gamma^{j-1} ),
\end{align}
for $\phi \in C_\nu^{i,j}$ and $\psi \in C_\gamma^{i,j}$.
If $\gamma$ is invertible, so is $\Phi$.

\section{Hom-type cohomology for Hom-Lie bialgebras}\label{sc:bialgcoho}

In this section we provide a Hom-type cohomology for Hom-Lie bialgebras $(\g,\nu,\delta,\alpha,\beta)$, where $\alpha$ and $\beta$ are arbitrary.

We consider the complex $$B^{i,j} = \Hom_\alpha(\Lambda^i \g,\Lambda^j \g) = \{ \phi \in \Hom(\Lambda^i \g,\Lambda^j \g) | \alpha^{\otimes j} \phi = \phi \alpha^{\otimes i} , \beta^{\otimes j} \phi = \phi \beta^{\otimes i}  \}$$ for $i,j \geq 1$ and $\{0\}$ otherwise.

\begin{theorem}
	The bracket defined in \cref{eq:bb} defines a graded Lie algebra structure on $B^{\bullet\bullet}$.
\end{theorem}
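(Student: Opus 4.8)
The plan is to realize $\bb{\phi,\psi}$ as the graded commutator of the composition product $\circ$ and to deduce the graded Lie axioms from a graded \emph{pre-Lie} (right-symmetric) identity for $\circ$. First I would fix the grading: set $\deg\phi = i+j-2$ for $\phi\in B^{i,j}=\Hom(\Lambda^i\g,\Lambda^j\g)$, so that both $\nu$ and $\delta$ sit in degree $1$. Since an insertion merges one output of the inner map with one input of the outer, the product sends $B^{i,j}\otimes B^{k,l}$ into $\Hom(\Lambda^{i+k-1}\g,\Lambda^{j+l-1}\g)$, whence $\deg(\phi\circ\psi)=\deg\phi+\deg\psi$ and $\bb{\cdot,\cdot}$ has degree $0$. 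Graded antisymmetry, namely $\bb{\psi,\phi}=-(-1)^{\deg\phi\deg\psi}\bb{\phi,\psi}$, is immediate from \cref{eq:bb}. Thus only two points remain: that $B^{\bullet\bullet}$ is closed under $\circ$ (hence under $\bb{\cdot,\cdot}$), and the graded Jacobi identity.

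For closure I would verify that $\phi\circ\psi$ is again multiplicative with respect to both $\alpha$ and $\beta$ whenever $\phi,\psi$ are. Each of $\alpha,\beta$ is a Hopf-algebra morphism of $\Lambda^\bullet\g$, so it commutes with $\mu$, $\Delta$ and $\pr$ (using $\pr\mu=\mu(\pr\otimes\id+\id\otimes\pr)$); sliding $\alpha^{\otimes\bullet}$ through the defining composite of $\circ$ and invoking the multiplicativity of $\phi$ and $\psi$ then moves it across $\phi\circ\psi$. The twisting factor $\alpha^\phi=\alpha^{\abs{\phi}_\alpha}\beta^{\abs{\phi}_\beta}$ built into the product is exactly what makes the heights add up so that $\phi\circ\psi$ has a well-defined height and stays inside $B^{\bullet\bullet}$; the relation $\chi_1\circ\alpha'\chi_2=\chi_1\alpha'\circ\chi_2$ established inside \cref{th:bbdif} is the bookkeeping tool for moving these structure maps in and out of $\circ$.

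The heart of the argument, and the step I expect to be the main obstacle, is the graded right pre-Lie identity
\begin{equation*}
	(\phi\circ\psi)\circ\chi-\phi\circ(\psi\circ\chi)=(-1)^{\deg\psi\deg\chi}\big((\phi\circ\chi)\circ\psi-\phi\circ(\chi\circ\psi)\big).
\end{equation*}
Once this holds, the graded Jacobi identity for the commutator follows by the standard pre-Lie computation, and together with closure and antisymmetry this proves that $(B^{\bullet\bullet},\bb{\cdot,\cdot})$ is a graded Lie algebra. To establish the identity I would expand both associators using coassociativity of $\Delta$ and the Leibniz rule for $\pr\mu$: the terms in which $\chi$ is inserted \emph{into} $\psi$ cancel against $\phi\circ(\psi\circ\chi)$, while the terms in which $\psi$ and $\chi$ are inserted independently into $\phi$ are symmetric under exchanging $\psi$ and $\chi$ up to the Koszul sign $(-1)^{\deg\psi\deg\chi}$. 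This is the classical Nijenhuis--Richardson / Lecomte--Roger calculation; the genuinely new difficulty is the Hom-twist, i.e.\ checking that the powers of $\alpha,\beta$ decorating the non-inserted slots agree on the two sides. Here the sliding identity of \cref{th:bbdif}, together with the assumed commutativity of $\alpha$ and $\beta$, reduces the twisted identity to its untwisted counterpart.

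As an alternative route, for finite-dimensional $\g$ one may untwist once and for all: stripping the $\alpha^\phi$-decorations identifies $(B^{\bullet\bullet},\circ)$ with the ordinary graded pre-Lie algebra underlying the Lecomte--Roger big bracket, for which the pre-Lie identity is already known, and this identification is compatible with $\circ$ precisely because $\alpha$ and $\beta$ commute and act as Hopf morphisms. Either way the problem is reduced to the classical graded Jacobi identity, so the only truly new labor is the sign- and structure-map bookkeeping of the Hom-twist.
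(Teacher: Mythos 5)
There is a genuine gap at the heart of your argument: the key lemma you propose, that $\circ$ satisfies the graded right pre-Lie identity, is \emph{false} for this product, because elements of $B^{\bullet\bullet}$ have several outputs. When you expand $\phi\circ(\psi\circ\chi)$, the projection $\pr$ in the outer composition can pick out either an output of $\psi$ or an output of $\chi$. The first case yields the nested term that also occurs in $(\phi\circ\psi)\circ\chi$; but in the second case \emph{both} $\phi$ and $\psi$ consume an output of $\chi$, producing a defect term that is symmetric in $\phi$ and $\psi$ — not in $\psi$ and $\chi$. So the associator is not symmetric under $\psi\leftrightarrow\chi$, and the "standard pre-Lie computation" never gets off the ground. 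A one-line toy model makes this transparent: on $\K[x,\xi]$ with the single-contraction product $f\circ g=\partial_\xi f\,\partial_x g$ one finds
\begin{equation*}
	(f\circ g)\circ h-f\circ(g\circ h)=\partial_\xi^2 f\,\partial_x g\,\partial_x h-\partial_\xi f\,\partial_\xi g\,\partial_x^2 h,
\end{equation*}
whose second term destroys right-symmetry as soon as $\partial_x^2 h\neq0$, i.e.\ as soon as maps have more than one output. This is exactly why your classical intuition is correct for the Nijenhuis--Richardson bracket on $\Hom(\Lambda^\bullet\g,\g)$ (one output, so the offending term vanishes) but fails on $\Hom(\Lambda^\bullet\g,\Lambda^\bullet\g)$. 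Your fallback route suffers from the same flaw: the classical Lecomte--Roger composition is likewise only Lie-admissible, not pre-Lie, so "the pre-Lie identity is already known" is not available even after untwisting.

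The correct, weaker statement — and the one the paper actually proves — is graded \emph{Lie-admissibility}. The paper expands both $\phi\circ(\psi\circ\chi)$ and $(\phi\circ\psi)\circ\chi$ using $Q=(\mu\otimes\id)(\id\otimes\pr\otimes\id)(\id\otimes\Delta)$, identifies the common nested term, and shows the two defects are symmetric in $(\phi,\psi)$ and in $(\psi,\chi)$ respectively; hence the Koszul-signed sum of the associator over all permutations of $\phi,\psi,\chi$ vanishes, which is exactly equivalent to the graded Jacobi identity for the commutator $\bb{\phi,\psi}$. Note also that the $\alpha,\beta$-equivariance built into $B^{\bullet\bullet}$ (your "closure" step, which is fine, as are your grading and antisymmetry observations) is used in the paper at precisely one point: to rearrange the defect term of $\phi\circ(\psi\circ\chi)$ into a manifestly $\phi\leftrightarrow\psi$ symmetric form. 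To repair your proof, replace the pre-Lie lemma by this symmetric-defect decomposition of the associator and conclude via Lie-admissibility.
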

\begin{proof}
	We compute $\phi \circ( \psi \circ \chi)$, for this we introduce $$Q = (\mu \otimes \id)(\id \otimes \pr \otimes \id) (\id\otimes \Delta),$$ and 
	get 
	\begin{align}
		\phi & \circ( \psi \circ \chi) = \mu ( \phi \otimes \alpha^\phi)(\id \otimes \mu) Q (\id \otimes \psi \otimes \alpha^\psi) (\id \otimes Q) ( \alpha^{\phi\psi} \otimes \alpha^\chi \otimes \chi) (\id \otimes \Delta) \Delta \nonumber                      \\
		     & = \mu (\phi \otimes \alpha^\phi) ( \id \otimes \mu) (Q \otimes \id) ( \alpha^\psi \otimes \psi \otimes \alpha^\psi)                                                                                            
		(\id \otimes Q) ( \Delta \otimes \id) (\alpha^\chi \otimes \chi) \Delta  \label{eq:bb1} \\
		     & + \mu^3 ( \alpha^\phi \otimes \phi \otimes \alpha^\phi) ( \id \otimes Q) ( \tau \otimes \id) ( \alpha^\psi \otimes \psi \otimes \alpha^\psi) ( \alpha^{\chi} \otimes \alpha^\chi \otimes \chi) \Delta^3  \nonumber .
	\end{align}
	Here we use the abbreviations $\mu^3 = \mu(\id \otimes \mu)$ and $\Delta^3 = (\Delta \otimes \id) \Delta$.
	Using that $\phi$ commutes with $\alpha$ and $\beta$, the last term can be rearranged to 
\begin{align}
		\mu^3 ( \alpha^\phi \otimes \alpha^\psi \otimes \id)(\psi \otimes \phi \otimes \id) ( \id \otimes Q) (\tau \otimes \id)( \id \otimes Q)(\alpha^\chi \otimes \alpha^\chi \otimes \chi) \Delta^3. 
\end{align}
	This can be seen to be symmetric in $\phi$ and $\psi$.
	The term $(\phi \circ \psi) \circ \chi$ can be computed similarly. One gets the term~\eqref{eq:bb1} plus a term symmetric in $\psi$ and $\chi$. 
	So in total we get $\sum_{perm. \phi, \psi,\chi} ( \phi \circ( \psi \circ \chi) - (\phi \circ \psi) \circ \chi) =0$. This is equivalent to the fact that $\bb{\cdot,\cdot}$ satisfies the Jacobi identity.
\end{proof}

\begin{prop}
	Let $\g$ be a vector space with two structure maps $\alpha,\beta :\g \to \g$ then 
	\begin{itemize}
		\item If $\nu \in B^{2,1}$ satisfies $\bb{\nu,\nu}=0$  then $(\g,\nu,\alpha)$ is a  Hom-Lie algebra. 
		\item If $\delta \in B^{2,1}$ satisfies $\bb{\delta,\delta}=0$ then $(\g,\delta,\beta)$ is a  Hom-Lie coalgebra. 
		\item If a  pair $(\nu,\delta)$ satisfies $\bb{\nu +\delta,\nu +\delta}=0$ then $(\g,\nu,\delta,\alpha,\beta)$ is a  Hom-Lie bialgebra. These are the Maurer-Cartan elements of $\bb{\cdot,\cdot}$.		
	\end{itemize}
\end{prop}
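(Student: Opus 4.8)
The plan is to read everything off the graded Lie algebra $(B^{\bullet\bullet},\bb{\cdot,\cdot})$ from the preceding theorem, using the dictionary between the grand crochet and the differentials $\delmm,\delaa$ recorded in \Cref{th:bbdif}. As a preliminary observation, $\nu$ and $\delta$ both have odd degree $1$, so $\bb{\cdot,\cdot}$ is symmetric on them, $\bb{\delta,\nu}=\bb{\nu,\delta}$, and $\bb{\nu,\nu}=2\,\nu\circ\nu$, $\bb{\delta,\delta}=2\,\delta\circ\delta$. Moreover the composition $\psi\circ\phi$ has arity equal to the sum of the input arities minus one and coarity the sum of the output arities minus one, so I can keep track of which homogeneous component $B^{i,j}$ each bracket lands in.

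For the first bullet, the condition $\nu\in B^{2,1}=\Hom_\alpha(\Lambda^2\g,\g)$ already encodes multiplicativity, since commuting with $\alpha$ reads $\alpha\circ\nu=\nu\circ(\alpha\otimes\alpha)$. It then remains to identify $\bb{\nu,\nu}=0$ with the Hom-Jacobi identity. Only $\alpha^\nu=\alpha$ enters the definition of $\nu\circ\nu$, so we are exactly in the situation of \cref{sc:grandcrochet}, and \Cref{th:bbdif} gives $\delmm\nu=(-1)^{2-1}\bb{\nu,\nu}=-\bb{\nu,\nu}$. Expanding $\delmm\nu$ on $x_1\wedge x_2\wedge x_3$ and using the antisymmetry of $\nu$ collapses it to a nonzero multiple of $\cycl_{x,y,z}[[x,y],\alpha(z)]$, which vanishes precisely when the Hom-Jacobi identity holds. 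The second bullet is the formal dual: here $\delta$ is the cobracket $\g\to\Lambda^2\g$, that is an element of $\Hom(\Lambda^1\g,\Lambda^2\g)$ commuting with $\beta$ (hence comultiplicative), and since the product $\circ$ and therefore $\bb{\cdot,\cdot}$ are symmetric under interchanging $\g$ with its dual (products with coproducts, $\alpha$ with $\beta$), the computation of $\delta\circ\delta$ mirrors that of $\nu\circ\nu$ and identifies $\bb{\delta,\delta}=0$ with the co-Jacobi identity.

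For the third bullet I would expand, by bilinearity and the symmetry above, $\bb{\nu+\delta,\nu+\delta}=\bb{\nu,\nu}+2\bb{\nu,\delta}+\bb{\delta,\delta}$. The three summands land in the distinct components $B^{3,1}$, $B^{2,2}$ and $B^{1,3}$ of $B^{\bullet\bullet}$, so the equation holds if and only if each vanishes separately; the outer two then give a Hom-Lie algebra and a Hom-Lie coalgebra by the first two bullets. It remains to show that the vanishing of the mixed term $\bb{\nu,\delta}=\nu\circ\delta+\delta\circ\nu\in B^{2,2}$ is equivalent to the compatibility condition \eqref{eq:liebi}; this is the main obstacle. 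I would compute $\delta\circ\nu$ and $\nu\circ\delta$ directly from the definition of $\circ$, unwinding the coproduct, the projection $\pr$ and the two products, and tracking that the inserted twists are $\alpha^\nu=\alpha$ and $\alpha^\delta=\beta$: the term $\delta\circ\nu$ should produce $\delta([x,y])$ while $\nu\circ\delta$ should produce the adjoint-action terms $\ad_{\beta(x)}\delta(y)-\ad_{\beta(y)}\delta(x)$, so that $\bb{\nu,\delta}=0$ is exactly \eqref{eq:liebi}. Finally, since $\nu+\delta$ has degree $1$, the equation $\bb{\nu+\delta,\nu+\delta}=0$ is by definition the Maurer--Cartan equation, which gives the last sentence. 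The hard part is purely the mixed-term bookkeeping: matching the Koszul signs and the precise placement of the powers of $\alpha$ and $\beta$ coming out of $\circ$ against the four terms of \eqref{eq:liebi}; everything else reduces to \Cref{th:bbdif}, its dual, and the degree/bidegree counting.
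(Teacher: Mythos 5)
Your proposal is correct and follows essentially the same route as the paper, whose entire proof is the remark that the statement ``follows easily using the computation done in the proof of \Cref{th:bbdif}'': you likewise reduce the first two bullets to the identities $\delmm\nu=-\bb{\nu,\nu}$ and its dual, and the third to the bidegree decomposition of $\bb{\nu+\delta,\nu+\delta}$ into $B^{3,1}\oplus B^{2,2}\oplus B^{1,3}$ with the mixed $B^{2,2}$ term matching the compatibility condition \eqref{eq:liebi}. The only difference is that you make explicit the bookkeeping (symmetry of the bracket on odd elements, multiplicativity being built into $B^{2,1}$, and the unwinding of $\bb{\nu,\delta}$) that the paper leaves entirely implicit.
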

\begin{proof}
	This follows easily using the computation done in  the proof of \Cref{th:bbdif}.
\end{proof}

Moreover the grand crochet can be restricted to the complex $\Hom_\alpha(\Lambda^\bullet\g,\g)$, which is the complex of the Chevalley-Eilenberg cohomology for Hom-Lie algebras. On this complex, it  agrees with the generalization of the Nijenhuis-Richardson bracket given in \citep{homcoho}.

We define the total complex $B^i = \bigoplus_{j=1}^i B^{j,i-j}$.
\begin{prop}
	Let $\g$ be a Hom-Lie bialgebra.
	The map $\del: B^\bullet \to B^\bullet$ defined by $\del \phi = \bb{\nu+ \delta, \phi}$ is a differential, i.e. $\del\circ\del =0$.
\end{prop}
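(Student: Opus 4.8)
The plan is to treat $\del$ as the differential attached to a Maurer--Cartan element of a graded Lie algebra, for which squaring to zero is automatic. Writing $\Theta := \nu + \delta$, the two ingredients I need are already available in the excerpt: the preceding Theorem states that $(B^{\bullet\bullet}, \bb{\cdot,\cdot})$ is a graded Lie algebra, so the grand crochet obeys the graded Jacobi (Leibniz) identity; and the preceding Proposition states that $\Theta$ is a Maurer--Cartan element, i.e.\ $\bb{\Theta,\Theta}=0$. My whole argument reduces $\del\circ\del=0$ to these two facts.

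First I would record that $\Theta$ is homogeneous of odd degree: the bracket $\nu$ and the cobracket $\delta$ carry the same total degree for the grading under which $\bb{\cdot,\cdot}$ is graded Lie (the degree controlling the Koszul sign in \eqref{eq:bb}), and that common degree is odd. Hence $\deg(\Theta)$ is odd and $(-1)^{\deg(\Theta)\deg(\Theta)} = -1$. This also confirms that $\del = \bb{\Theta,\cdot}$ raises total degree by one, so that it indeed maps $B^\bullet$ into $B^\bullet$ and the statement is well posed.

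Then the computation is immediate. Applying the graded Jacobi identity to the triple $\Theta,\Theta,\phi$ gives
\[
\bb{\Theta, \bb{\Theta,\phi}} = \bb{\bb{\Theta,\Theta},\phi} + (-1)^{\deg(\Theta)\deg(\Theta)}\,\bb{\Theta,\bb{\Theta,\phi}} = \bb{\bb{\Theta,\Theta},\phi} - \bb{\Theta,\bb{\Theta,\phi}},
\]
so that $2\,\bb{\Theta,\bb{\Theta,\phi}} = \bb{\bb{\Theta,\Theta},\phi}$. Since $\bb{\Theta,\Theta}=0$ the right-hand side vanishes, and as $\operatorname{char}\K = 0$ the factor $2$ is invertible, whence $\del(\del\phi) = \bb{\Theta,\bb{\Theta,\phi}} = 0$ for every $\phi$.

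There is essentially no genuine obstacle in this final step itself: all of the substance lives in the two results already cited, namely the graded Jacobi identity for the grand crochet and the Maurer--Cartan equation $\bb{\nu+\delta,\nu+\delta}=0$. The only points that require care are bookkeeping ones --- confirming that $\nu$ and $\delta$ share the same odd parity so the sign in the graded Jacobi identity is $-1$ (and the coefficient $2$ is invertible in characteristic $0$), and that the grand crochet is genuinely defined and closed on the equivariant subcomplex $B^{\bullet\bullet}$, so that each bracket appearing above stays inside $B^\bullet$.
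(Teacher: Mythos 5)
Your proposal is correct and matches the paper's argument: the paper's proof is precisely the one-line remark that this is standard and follows from the Jacobi identity of the grand crochet, and your write-up simply spells out that standard computation (graded Jacobi applied to $\Theta,\Theta,\phi$, the Maurer--Cartan equation $\bb{\Theta,\Theta}=0$, and invertibility of $2$ in characteristic zero, with $\nu$ and $\delta$ of the same odd degree).
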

\begin{proof}
	This is standard and follows from the Jacobi identity of the grand crochet.
\end{proof}

With this one can define a cohomology for Hom-Lie bialgebras.
 It can be used to describe deformations of Hom-Lie algebra, coalgebra and bialgebras, where  the structure maps are fixed.

\begin{remark}
	Note that in the Hom-context it is not straightforward to define an analog to quasi-Lie bialgebras. Since extending the bracket to including maps in $\Hom(\K,\Lambda^\bullet \g)$ would involve $\alpha^{-1}$.
\end{remark}

\begin{remark}
	In the case that $\alpha= \beta = \id$ the cohomology we defined here is precisely the normal cohomology for Lie bialgebras, since in this case the complex is just $\Hom(\Lambda^\bullet \g,\Lambda^\bullet g)$ and the grand crochet we defined here is the ordinary one.
\end{remark}

\bibliographystyle{bibstyle}
\bibliography{bibli}

\end{document}